\newtheorem{theorem}{Theorem}[section]
\newtheorem{cor}{Corollary}[section]
\newtheorem{rem}{Remark}[section]
\newtheorem{lem}{Lemma}[section]
\newtheorem{ex}{Example}[section]
\newcommand{\bz}{\mathbb{Z}}
\newcommand{\br}{\mathbb{R}}
\newcommand{\bpp}{\mathbb{P}}
\newcommand{\bq}{\mathbb{Q}}
\newcommand{\bc}{\mathbb{C}}
\newcommand{\bg}{\mathbb{G}}
\newcommand{\pr}{\mathbf{P}}
\newcommand{\bP}{\mathbf{Pic}}
\newcommand{\dd}{\mathbf{dim}}
\newcommand{\mo}{\mathcal{O}}
\newcommand{\bd}{\mathbf{ed}}
\newcommand{\bF}{\mathbb{F}}
\newcommand{\bx}{\mathcal{X}}
\newcommand{\bm}{\mathcal{M}}
\newcommand{\bs}{\mathbf{Spec}}
\newcommand{\km}{\mathfrak{m}}
\newcommand{\cf}{\mathcal{F}}
\title{Essential dimension of moduli stack of polarized K3 surfaces}
\author{Anningzhe Gao}
\date{} 
\begin{document}

\maketitle
\tableofcontents
\section{Introduction}

We begin with some field $k$. Denote $Field_k$ the category of all fields extensions of $k$, with maps the obvious inclusions. Given a functor $F:Field_k\to Set$. For some element $\eta\in F(K)$ where $K/k$ is a field extension, we say some intermediate field $k\subseteqq L\subseteqq K$ a defining field of $\eta$ if there exists some $\theta\in F(L)$ such that the image of $\theta$ in $F(K)$ under the natural inclusion $L\to K$ is isomorphic to $\eta$. Then we can define the essential dimension of $\eta$, which we denoted by $\bd(\eta)$, to be the smallest transcendental  degree of $L$ when $L$ goes over all the defining field of $\eta$. Similarly, we define the essential dimension of $F$ to be the maximal $\bd(\eta)$ when $\eta$ goes over all elements in $F(K)$ for all $K$. We denote it by $\bd F$. We will use $\bd_k\eta$ and $\bd_k F$ is we want to specify the ground field $k$.

The natural examples of essential dimension in algebraic geometry coming from the algebraic stacks, in particular the moduli stack of some type varieties. To be precise, given an algebraic stack $\bx$ over $k$, we may consider the functor from $Field_k\to Set$ by sending a field extension $K$ of the set of isomorphism classes of $\bx(K)$. Then we define the essential dimension of the algebraic stack to be the essential dimension of the functor, we denote it by $\bd\bx$. We will give the basic facts of essential dimension of stacks in section 2. 

In this paper we will consider the essential dimension of the moduli stack of polarized K3 surfaces with degree $d$, we will use $\bm_d$ to denote this stack (be careful here since the degree is always an even number so some references will use $\bm_{2d}$, which is a little different here). The main theorems here are the following:

\begin{theorem}
Over $\bc$, the essential dimension of $\bm_d$ is 20 if $d=2$ and 19 if $d\geq4$.
\end{theorem}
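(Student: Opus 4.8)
The plan is to realize $\bm_d$ as a smooth Deligne--Mumford stack and to reduce the computation, via the genericity principle of Section 2, to the essential dimension of its residual gerbe at the generic point. Because deformations of K3 surfaces are unobstructed and a polarized K3 surface has finite automorphism group, $\bm_d$ is a smooth integral Deligne--Mumford stack over $\bc$. The genericity theorem then gives $\bd\,\bm_d = \bd_{\bc}\,\mathcal{G}_\eta$, where $\mathcal{G}_\eta$ is the residual gerbe over the function field $K = \bc(\bm_d)$ of the coarse space, and the gerbe formula of Section 2 further splits this as $\bd_{\bc}\,\mathcal{G}_\eta = \mathrm{trdeg}_{\bc}K + \bd_K\,\mathcal{G}_\eta$. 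Thus the problem separates into a dimension count for the coarse space and a computation of the generic automorphism gerbe.

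For the first term I would invoke the Torelli theorem: the coarse space $M_d$ is an arithmetic quotient of the $19$-dimensional period domain attached to the lattice $L_d^{\perp}$ of signature $(2,19)$, the orthogonal complement of the polarization class inside the K3 lattice. Hence $\mathrm{trdeg}_{\bc}K = \dim M_d = 19$, which already yields $\bd\,\bm_d \ge 19$ since a versal deformation uses all $19$ moduli parameters. Everything then reduces to the gerbe term $\bd_K\,\mathcal{G}_\eta$, and this is where the two cases part.

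The decisive and most delicate step is the determination of the generic automorphism group. For a very general $(X,L)$ one has $\bn(X) = \bz\cdot L$ with $L^2 = d$, so any automorphism of $(X,L)$ fixes $\bn(X)$ pointwise and acts on the transcendental lattice $T(X)$ as a Hodge isometry, hence as $\pm 1$. The isometry that is $+1$ on $\bn(X)$ and $-1$ on $T(X)$ extends to the full K3 lattice --- and is then realized by a genuine automorphism by the Torelli theorem --- exactly when it is compatible with the gluing along the discriminant group $\bz/d$, i.e. exactly when $-1 \equiv +1$ on $\bz/d$, that is when $d = 2$. Thus for $d \ge 4$ the generic stabilizer is trivial, $\mathcal{G}_\eta$ is the trivial gerbe with $\bd_K\,\mathcal{G}_\eta = 0$, and $\bd\,\bm_d = 19$; for $d = 2$ the extra isometry is precisely the deck involution of the double plane $X \to \bpp^2$ branched along a sextic, so the generic stabilizer is $\bz/2 \cong \mu_2$ and $\mathcal{G}_\eta$ is a $\mu_2$-gerbe over $K$.

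It remains to evaluate $\bd_K\,\mathcal{G}_\eta$ when $d = 2$, and the crucial point is that this $\mu_2$-gerbe is \emph{trivial}: a nontrivial $\mu_2$-gerbe would contribute its Brauer index $2$ and force $\bd\,\bm_2 = 21$, spoiling the count. To see triviality I would use the double-cover description. The stack of smooth plane sextics $[U/\mathrm{PGL}_3]$ carries a tautological universal sextic, whose associated double cover is a universal family of degree-$2$ K3 surfaces equipped with a globally defined deck involution. Since a general sextic has no projective automorphisms, $[U/\mathrm{PGL}_3]$ has trivial generic stabilizer and is generically isomorphic to $M_2$; the universal K3 with its global involution thus furnishes a section of $\bm_2 \to M_2$ over $K$, exhibiting $\mathcal{G}_\eta$ as the split gerbe $B\mu_2$. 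Therefore $\bd_K\,\mathcal{G}_\eta = \bd_K\,B\mu_2 = 1$, the essential dimension of a quadratic \'etale algebra, and $\bd\,\bm_2 = 19 + 1 = 20$. The points requiring the most care are the lattice-theoretic case analysis pinning the generic stabilizer to $\bz/2$ exactly when $d=2$, the verification that the generic gerbe is genuinely split rather than a nontrivial Brauer class, and the confirmation that the genericity reduction is unaffected by the lower-dimensional loci of K3 surfaces carrying larger finite automorphism groups --- guaranteed, respectively, by discriminant-form computations, the universal double cover, and the smoothness of $\bm_d$.
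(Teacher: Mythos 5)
Your proposal is correct and follows essentially the same route as the paper: the coarse-space-plus-generic-gerbe decomposition of Theorem \ref{gen}, the identification of the generic stabilizer as trivial for $d\geq4$ (via Picard rank $1$ and the discriminant-group obstruction to extending $-\mathrm{id}$ on the transcendental lattice, which is exactly the content of the result the paper cites from Huybrechts) and as $\bz/2$ for $d=2$, and the splitting of the resulting $\mu_2$-gerbe via the double-cover description. The one caution: the step you state as a tautology --- that $[U/\mathrm{PGL}_3]$ ``carries a universal double cover'' furnishing a section over $K$ --- is precisely where the paper does its only real work in the $d=2$ case. A $K$-point of the generic gerbe is a double cover of a Brauer--Severi surface $P/K$ (not a priori $\bpp^2_K$) branched along a sextic $C$, and its existence requires a square root of $\mo(C)$ in $\bP(P)\cong\bz$; the paper produces one as $\omega_P^{-1}$, of degree $3$. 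Equivalently, in your equivariant language, $\mo_{\bpp^2}(3)$ is $\mathrm{PGL}_3$-linearized only because it is the anticanonical bundle, and the untwisted $\mo(\mC)$ on the parameter space needs this canonical square root for the family to descend; you should make that explicit rather than assert it. Likewise, your passage from ``very general $(X,L)$ has trivial stabilizer'' to ``the generic point has trivial stabilizer'' is justified in the paper not by smoothness but by a semicontinuity argument on the finite inertia (Lemma \ref{tech}); the conclusion is the same, but the reason you give is not quite the operative one.
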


\begin{theorem}
Let $\bF$ be an algebraic closure of some finite field $\bF_p$ for some $p>23$. Then the essential dimension of $\bm_d$  is 19 if $d\geq4$.
\end{theorem}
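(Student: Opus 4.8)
The plan is to run the same two-sided estimate as over $\bc$: bound $\bd\,\bm_d$ below by the dimension of the coarse moduli space and above by the local structure of the stack, the decisive simplification being that for $d\geq 4$ the stack $\bm_d$ is generically an algebraic space. Before doing so I would isolate the two inputs that must be imported into characteristic $p$. First, by the theory of moduli of polarized K3 surfaces in mixed characteristic, for $p>2$ the base change $\bm_d\otimes\bF$ is a smooth separated Deligne--Mumford stack of dimension $19$; the dimension is pure deformation theory, $\dim H^1(X,T_X)=20$ and fixing the polarization cuts this to $19$. Second --- and this is the role of the hypothesis $p>23$ --- every finite-order automorphism of a polarized K3 surface has order divisible only by primes $\leq 23$: the symplectic part embeds into the Mathieu group $M_{23}$ by Mukai's theorem, while a non-symplectic automorphism of order $n$ satisfies $\varphi(n)\leq 22=\operatorname{rank}H^2(X)$. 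Hence for $p>23$ one has $p\nmid\#\mathrm{Aut}(X,L)$ at every geometric point, so $\bm_d\otimes\bF$ is a \emph{tame} Deligne--Mumford stack and the local structure theory used below is available.

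For the lower bound I would use that the coarse space $M_d$ is a quasi-projective variety of dimension $19$ over $\bF$. If an object $\eta$ over $K$ descends to an intermediate field $L$, then by functoriality of coarse spaces its image in $M_d(K)$ already descends to $L$; applying this to the generic point of $M_d$, whose residue field has transcendence degree $19$, forces $L$ to contain $\bF(M_d)$ and hence $\bd\,\eta\geq 19$. Thus $\bd\,\bm_d\geq 19$, and this step is insensitive to the characteristic once $\dim M_d=19$ is known.

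For the upper bound the key geometric claim is that $\bm_d$ is generically a scheme. At the geometric generic point of $M_d$ the very general polarized K3 surface of degree $d$ should have Picard lattice $\bz\cdot L$ with $L^2=d$; I would prove this by a positive-characteristic Noether--Lefschetz argument, namely that each locus on which an extra algebraic (Tate) class appears is a proper closed substack, so the generic point avoids them all. For $d\geq 4$ a K3 surface with $\mathrm{Pic}=\bz\cdot L$ carries no nontrivial polarized automorphism --- unlike the excluded case $d=2$, where the double cover $X\to\bpp^2$ supplies a canonical involution --- whence $\mathrm{Aut}(X,L)=1$ and $\bm_d$ is generically an algebraic space. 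Now I would invoke the local structure of a smooth tame Deligne--Mumford stack: \'etale-locally around any point $x$ one has $\bm_d\cong[T_x/\mathrm{Aut}_x]$ with $\mathrm{Aut}_x$ finite of order prime to $p$ acting \emph{faithfully} on the $19$-dimensional tangent space $T_x$, faithfulness being forced by generic triviality (an automorphism acting trivially on $T_x$ would persist on a neighborhood). Since a finite group acting faithfully and linearly on $V$ has $\bd\,[V/G]\leq\dim V$ --- the coarse quotient $V/G$ has dimension $\dim V$ and over its free locus objects descend to the residue field, with the non-free strata handled by induction --- taking the supremum over the induced stratification of $\bm_d$ gives $\bd\,\bm_d\leq 19$. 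Together with the lower bound this yields $\bd\,\bm_d=19$.

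I expect the real difficulty to lie in the geometric inputs rather than the essential-dimension bookkeeping. The three delicate points are: (i) that $\bm_d\otimes\bF$ is smooth, separated and of dimension $19$ for $p>23$, which rests on good reduction and the integral theory of the period map for K3 surfaces; (ii) the positive-characteristic Noether--Lefschetz statement that the geometric generic K3 has Picard rank $1$, where one cannot argue by specialization (which only raises the Picard number) and must instead exploit the full $19$-dimensionality of the family together with the \'etale or crystalline realization to see that the jumping loci are proper; and (iii) globalizing the \'etale-local bounds $\bd\,[T_x/\mathrm{Aut}_x]\leq 19$ to the stack, i.e.\ checking that the essential dimension is realized as the supremum over the stratification by local models. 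Finally, the restriction to $d\geq 4$ is genuinely necessary: for $d=2$ the covering involution acts trivially on deformations, so $\bm_2$ is a nontrivial $\mu_2$-gerbe over a $19$-dimensional base contributing an extra $+1$, and I would not attempt the gerbe-class computation in characteristic $p$ here.
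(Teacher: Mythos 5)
Your skeleton --- lower bound from $\dd M_d=19$, upper bound by showing the generic object has trivial automorphism group, plus a tameness check so that the genericity theorem applies --- is the same as the paper's (Theorem \ref{gen} together with Lemma \ref{tech}), and your tameness discussion is an acceptable variant of the paper's citation of Dolgachev--Keum. The genuine gap is in the one step you dismiss as routine: producing trivial automorphism groups at the generic point in characteristic $p$. You reduce this to two claims, (a) the geometric generic polarized K3 has Picard rank $1$, and (b) a K3 with $\mathrm{Pic}=\bz L$ and $L^2=d\geq 4$ has no nontrivial polarized automorphism; neither is available off the shelf over a field of characteristic $p$. For (a), note that by the Tate conjecture every K3 surface over $\overline{\bF}_p$ has \emph{even} geometric Picard number, so every closed point of $M_d$ lies in your ``jumping loci''; the union of the Noether--Lefschetz loci therefore contains all closed points, and the avoidance argument for the generic point requires the characteristic-$p$ deformation theory of line bundles on K3 surfaces (Deligne, Ogus) rather than the characteristic-zero countability argument you sketch. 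For (b), the statement you invoke is exactly Theorem \ref{111} (Huybrechts, Cor.\ 12.2.12), whose proof runs through the Global Torelli theorem over $\bc$ and the irreducibility of the transcendental Hodge structure; over an algebraically closed field of characteristic $p$ there is no such Torelli input, and the injectivity and rigidity statements for $\mathrm{Aut}(X)\to O(H^2)$ that the argument needs are precisely what fail to be formal.

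This missing step is what the paper's entire Section 5 is built to supply, by a quite different route: instead of Picard rank $1$ it works with \emph{ordinary K3 surfaces of Picard rank $20$}, which over $\bF$ are classified by their rank-$2$ transcendental lattices with discriminant a nonzero square mod $p$ (Theorem \ref{classify}); it uses Nygaard's canonical lifting and Jang's theorem that the reduction map $Aut(\mathfrak{X}\otimes K)\to Aut(X)$ is an isomorphism for weakly tame K3 surfaces (Theorem \ref{aut}) to transport the automorphism computation to characteristic $0$; and it proves a density statement in the period domain (Theorem \ref{22}) guaranteeing the existence of a complex polarized K3 with trivial automorphism group whose transcendental lattice satisfies the arithmetic condition, so that its reduction is a single $\bF$-point of $\bm_d$ with trivial automorphism group, after which Lemma \ref{tech} propagates triviality to the generic point. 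To repair your proposal you would either have to import this lifting machinery, or prove a characteristic-$p$ Torelli-type statement strong enough to give (b) directly --- the latter is not in the literature in the generality you need. (You should also record the hypothesis $p\nmid d$, which the paper's Section 5 theorem carries.)
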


The degree 2 case is proved by Angelo Vistoli through an email with the author, we will give the proof in Section 3. In Section 4 we will give the proof in higher degree case. In the last section we will consider positive characteristic case.

We will call a Deligne-Mumford stack just a DM stack.\\
$\mathbf{Acknowledgment} $: I would like to thank my advisor Martin Olsson who introduced me this topic. Prof. Angelo Vistoli gives the author lots of advice for the proof, in particular his idea in the case of degree 2. Prof. Max Lieblich suggests the author to consider deformation theory in positive characteristic case. Prof. Martin Olsson helps me with the technical part of the proof. The author is really grateful with their kind help. 

\section{Basic facts about the essential dimension of algebraic stacks}

In this section we will give the basic theory of essential dimensions of algebraic stacks, the main reference here is \cite{Vistoli2007essential}. For the basic theory of stacks we refer to \cite{olssonstack}.

As we introduced in the previous section, we may define the essential dimension of a given algebraic stack. In particular if we have a scheme $X$, for simplicity we assume it is integral. 

\begin{ex}
For an integral scheme $X$ over $k$, the functor defined above is just sending a field extension $K$ to the $K$ points of $X$, i.e. $K\to X(K)$. But we know any morphism from the spectrum of a field to a scheme is always factored through $k(x)$, where $x$ is the image of $\bs K$, so it is easy to conclude that $\bd X=\dd X$ in this case.
\end{ex}

The main tool here we need to use is the following theorem, which relates the essential dimension of an algebraic stack with its generic gerbe. Recall that if a Deligne-Mumford stack has finite inertia stack, then by Keel-Mori theorem it has a coarse moduli space.

\begin{theorem}\label{gen}
Let $\bx$ be a smooth tame connected DM stack locally of finite type with finite inertia stack. Then it has a coarse moduli space $X$ with a morphism $\pi:\bx\to X$. Denote the function field of $X$ to be $K$, and $\bx_K=\bx\times_X specK$. Then we have
$$\bd_k\bx=\dd X+\bd_K\bx_K$$
\end{theorem}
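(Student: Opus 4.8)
The plan is to prove the formula $\bd_k \bx = \dd X + \bd_K \bx_K$ by establishing two inequalities separately, and the central fact making this work is that the essential dimension measures the minimal number of parameters needed to define objects, which splits naturally between the ``horizontal'' parameters recorded by the coarse space $X$ and the ``vertical'' gerbe-theoretic parameters over the generic point.

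Let me sketch the approach. First I would recall that by Keel-Mori (the hypotheses of finite inertia on a DM stack of finite type guarantee its applicability), the coarse moduli space $X$ exists together with the proper quasi-finite map $\pi:\bx \to X$, and that $\dd X = \bd X$ by Example~2.1 since $X$ is a scheme (integral, as $\bx$ is connected and smooth hence $X$ is integral). The key structural input is that for a smooth tame connected DM stack the generic behaviour is controlled by the generic gerbe $\bx_K$, which is a gerbe over $\spec K$ where $K = k(X)$. The proof then proceeds by comparing the functor $F_\bx: Field_k \to Set$ sending $L \mapsto \{\text{iso.\ classes in } \bx(L)\}$ with the corresponding functor for $\bx_K$.

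For the inequality $\bd_k \bx \le \dd X + \bd_K \bx_K$, I would start with an object $\xi \in \bx(\Omega)$ for some field $\Omega/k$, inducing a point $\spec \Omega \to X$ whose image has residue field contained in some finitely generated subfield; the image point can be descended to a field $L_0$ of transcendence degree at most $\dd X$. Over this generic-type data the object $\xi$ becomes an object of a gerbe, and by definition of $\bd_K \bx_K$ it can be further descended adding at most $\bd_K \bx_K$ to the transcendence degree. Composing these two descents realizes $\xi$ over a field of transcendence degree at most $\dd X + \bd_K \bx_K$. The reverse inequality $\bd_k \bx \ge \dd X + \bd_K \bx_K$ is the more delicate direction: one takes a generic object of $\bx_K$ achieving the essential dimension of the generic gerbe, views it as an object of $\bx$ over a field whose transcendence degree over $k$ is at least $\dd X$ (because it dominates the generic point of $X$), and argues that any defining field of this object as an object of $\bx$ must simultaneously carry the geometry of $X$ and the gerbe-theoretic data, so its transcendence degree is bounded below by $\dd X + \bd_K \bx_K$.

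The hard part, which I expect to be the main obstacle, is controlling the interaction between the two descents in the lower bound: a priori a clever defining field could ``share'' transcendence degree between the base $X$ and the gerbe $\bx_K$, so one must show no such economy is possible. This is precisely where the hypotheses of \emph{smoothness} and \emph{tameness} enter. Smoothness ensures that deformations are unobstructed so that the generic gerbe faithfully captures the local structure, while tameness guarantees that the gerbe is banded by a group of order prime to the characteristic, so that its essential dimension is computed by a clean cohomological invariant that does not degenerate under specialization. I would handle this by a specialization/genericity argument: given any defining field $L$ of an object $\eta$, base-change to the generic point forces the transcendence-degree contribution of $X$ to be present, and the residual gerbe data over the generic fibre then contributes independently at least $\bd_K \bx_K$. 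The careful bookkeeping of these two contributions, and verifying they add rather than overlap, is the technical heart of the argument, and references such as \cite{Vistoli2007essential} supply the gerbe-theoretic comparison lemmas that make the separation rigorous.
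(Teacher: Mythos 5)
The paper does not prove this theorem itself --- it quotes it from \cite{Vistoli2007essential} (Theorem 4.1) and \cite{brosnan2009essential} (Theorem 6.1) --- so the relevant comparison is with the proof given there, and your proposal locates the difficulty in the wrong inequality. The lower bound $\bd_k\bx\ge\dd X+\bd_K\bx_K$, which you call ``the more delicate direction'' and for which you worry about a defining field ``sharing'' transcendence degree between $X$ and the gerbe, is in fact the easy half and needs neither smoothness nor tameness. If $\eta\in\bx_K(\Omega)$ attains $\bd_K\bx_K$ and $L\subseteq\Omega$ is any field of definition of $\eta$ viewed as an object of $\bx$ over $k$, then since $\bs\,\Omega\to X$ hits the generic point and factors through $\bs\,L$, the field $K$ embeds into $L$ compatibly with the given embedding $K\to\Omega$; hence $L$ is a field of definition of $\eta$ as an object of the gerbe $\bx_K$, and additivity of transcendence degree in the tower $k\subseteq K\subseteq L$ gives $\mathrm{trdeg}_k L=\mathrm{trdeg}_k K+\mathrm{trdeg}_K L\ge\dd X+\bd_K\bx_K$. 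No sharing is possible, by the tower formula.

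The genuine gap is in your upper bound. You write that after descending the image point of $\xi\in\bx(\Omega)$ in $X$, ``the object $\xi$ becomes an object of a gerbe, and by definition of $\bd_K\bx_K$ it can be further descended adding at most $\bd_K\bx_K$.'' But if $\xi$ lies over a \emph{non-generic} point $x\in X$, it is an object of the residual gerbe at $x$, about which the definition of $\bd_K\bx_K$ says nothing; in general the essential dimension of objects over special points of a stack can strictly exceed that of generic objects, so this step fails as stated. The entire content of the theorem --- the ``genericity theorem'' of \cite{brosnan2009essential} --- is that for a \emph{smooth tame} DM stack one can deform such a $\xi$ over a complete discrete valuation ring to an object lying over the generic point, and that under tameness the essential dimension does not drop under this specialization, yielding $\bd_k(\xi)\le\bd_K\bx_K+\dd X$. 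Your proposal invokes smoothness and tameness only in the lower bound (where they are irrelevant) and omits the deformation-to-the-generic-fibre argument that is the actual substance of the proof; as written, the upper bound is asserted rather than proved.
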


For proof, see \cite{Vistoli2007essential} Theorem 4.1 in the case of char 0, and see \cite{brosnan2009essential} Theorem 6.1 for the general case. The following two corollaries are direct from Theorem \ref{gen}.

\begin{cor}
Given $\bx$ as in the theorem, for any open substack $\mathcal{U}$ of $\bx$, we have
$$\bd\bx=\bd\mathcal{U}$$
\end{cor}

\begin{cor}
Given $\bx$ as above. If the generic point only has trivial automorphism, then $$\bd\bx=\dd X$$
\end{cor}

The fiber product $\bx_K$ is a gerbe in most of cases. So we just need to consider the essential dimension of a gerbe over $\bs K$ for  a field $K$. Of course this is hard in general, but in some special cases we do have an answer. We need to use the Brauer groups. For details, see \cite{Brsurvey}

Let $G$ be a sheaf of abelian groups in the category of schemes over $K$. We say a gerbe $\bx$ over $K$ is banded by $G$ if for any scheme $T$, and object $\eta\in\bx(T)$ there is an isomorphism $G_T\cong Aut_T(\eta)$ with is compatible with the obvious pull-backs. And we know that gerbes banded by $G$ over $K$ can be classified by the cohomology group $H^2(K,G)$. In the special case when $G=\bg_m$ we get the Brauer group $Br(K)=H^2(K,\bg_m)$. We have the following injection
$$H^2(K,\mu_n)\hookrightarrow H^2(K,\bg_m)$$
and the boundary map
$$\partial:H^1(K,PGL_n)\to H^2(K,\mu_n)$$
induced by the exact sequence
$$1\to \mu_n\to SL_n\to PGL_n\to1$$
For an element $\alpha\in H^2(K,\bg_m)$, we define its index $ind(\alpha)$ tob e the smallest $n$ such that $\alpha$ is in the image of $H^1(K,PGL_n)$ under the boundary map $\partial$. Then we have the following theorem, which is highly non-trivial
\begin{theorem}(\cite{Vistoli2007essential}, Theorem 5.4)\label{tool}
Let $\bx$ be a gerbe over $K$ banded by $\mu_{p^m}$ where $p$ is a prime number, then we have
$$\bd\bx=ind([\bx])$$
here $[\bx]$ represents the class of $\bx$ in $H^2(K,\mu_{p^m})$
\end{theorem}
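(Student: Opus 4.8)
\emph{Setup.} First I would make the objects of the gerbe explicit. Using the injection $H^2(K,\mu_{p^m})\hookrightarrow H^2(K,\bg_m)=Br(K)$ from the Kummer sequence, I identify $[\bx]$ with a $p^m$-torsion Brauer class $\alpha$. Since the exponent of $\alpha$ is a power of $p$ and index and exponent share the same prime divisors, $ind(\alpha)$ is itself a power of $p$; write $n=ind(\alpha)$ and let $D$ be the (necessarily $p$-primary) division algebra of degree $n$ in the class of $\alpha$. A gerbe banded by an abelian group acquires an object over a field $L\supseteq K$ precisely when its class dies over $L$, so $\bx(L)\neq\emptyset$ exactly when $L$ splits $\alpha$, and in that case the isomorphism classes of objects form a torsor under $H^1(L,\mu_{p^m})=L^\ast/(L^\ast)^{p^m}$. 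This splits the computation of $\bd\bx$ into two contributions: the cost of splitting $\alpha$, and the cost of the $\mu_{p^m}$-torsor of trivializations.

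\emph{Upper bound.} The plan is to build a versal object over a field of transcendence degree exactly $n$. The Severi--Brauer variety $\mathrm{SB}(D)$ is a form of $\bpp^{n-1}$, its function field $K(\mathrm{SB}(D))$ splits $D$, and $\mathrm{tr.deg}_K K(\mathrm{SB}(D))=n-1$; fix an object $\eta_0\in\bx(K(\mathrm{SB}(D)))$. The generic $\mu_{p^m}$-torsor is the Kummer class of a single transcendental $t$, living over $K(\mathrm{SB}(D))(t)$, of transcendence degree $(n-1)+1=n$; twisting $\eta_0$ by it yields an object $\eta$ over this field. Since $\mathrm{SB}(D)$ is a generic splitting variety (every splitting field of $\alpha$ carries an $\mathrm{SB}(D)$-point and hence receives a specialization from the generic point) and the Kummer torsor is versal, every object over every field is a pullback of $\eta$. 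Thus $\bd\bx\le n=ind(\alpha)$.

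\emph{Lower bound.} This is where the real work lies. Every field of definition of any object must split $\alpha$, so the splitting part is controlled by the canonical dimension of $\mathrm{SB}(D)$. The crucial input is Karpenko's incompressibility theorem: for a $p$-primary division algebra $D$ of degree $n$, the variety $\mathrm{SB}(D)$ is incompressible, i.e. its canonical $p$-dimension equals $\dim\mathrm{SB}(D)=n-1$. This is exactly the point at which the prime-power hypothesis on the band is used. I would then combine this with a fibration argument: writing a field of definition $L_0$ of the versal object $\eta$ as a tower that first splits $\alpha$ and then trivializes the generic $\mu_{p^m}$-torsor, Karpenko forces $\mathrm{tr.deg}\ge n-1$ for the splitting layer, while the nontrivial Kummer parameter forces one further transcendental (the elementary fact $\bd(\mu_{p^m})=1$), giving $\mathrm{tr.deg}_K L_0\ge n$. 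Hence $\bd\bx\ge ind(\alpha)$, and with the upper bound $\bd\bx=ind(\alpha)$.

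\emph{Main obstacle.} The hard part is entirely the lower bound, resting on Karpenko's theorem on the canonical dimension of Severi--Brauer varieties together with the essential $p$-dimension machinery of Karpenko--Merkurjev; making the combination of the incompressibility bound with the single Kummer parameter precise (via a discrete valuation or specialization argument) is the technical heart. The prime-power hypothesis cannot be dropped: for composite $n$ the index is no longer the right answer, and the clean decomposition into ``splitting plus one Kummer parameter'' fails.
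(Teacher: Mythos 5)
The paper offers no proof of this statement --- it is quoted directly from the cited reference (Brosnan--Reichstein--Vistoli) --- and your outline reproduces the architecture of their argument: the upper bound by passing to the function field of the Severi--Brauer variety of the underlying division algebra and adjoining one Kummer parameter, and the lower bound from Karpenko's incompressibility theorem for Severi--Brauer varieties of $p$-primary division algebras, which is indeed exactly where the prime-power hypothesis on the band enters. The one step you leave as a plan rather than a proof is the combination showing that the Kummer parameter genuinely contributes an extra $1$ on top of the canonical-dimension bound for the splitting layer (a general field of definition does not come presented as a tower, so this requires the cited authors' technical lemma); you correctly flag this as the crux rather than claiming it.
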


\section{The case when $d=2$}

Remember $k=\bc$

For a K3 surface $X$ over $k$, we mean $X$ a smooth proper variety over $k$ with trivial canonical line bundle and $H^1(X,\mo_X)=0$. For a line bundle $L$ on $X$, we say $L$ is primitive if there exists no line bundle $M$ on $X$ such that $L=M^{\otimes d}$ for some $d>1$. A polarized K3 surface od degree $d$ is a pair $(X,L)$ with $X$ a K3 surface and $L$ a primitive ample line bundle with $L^2=d$. Note here $d$ must be an even number.

We will omit the details for the moduli stack (space) of polarized K3 surfaces with degree $d$, for details, see \cite{huybrechts2016lectures} and \cite{rizov2005moduli}.

We use $\bm_d$ to denote the moduli stack of polarized K3 surfaces with degree $d$. We have 

\begin{theorem}\label{K3moduli}
The moduli stack $\bm_d$ is a connected smooth DM stack with finite inertia stack locally of finite type.
\end{theorem}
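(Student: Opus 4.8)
The plan is to present $\bm_d$ as a global quotient of a locally closed subscheme of a Hilbert scheme, deduce algebraicity, finite type, the Deligne--Mumford property and finiteness of inertia from this presentation together with the finiteness of automorphism groups, and then obtain smoothness and connectedness from deformation theory and the period map respectively. First I would fix a uniform very ample power: by Saint-Donat's theorem, for every polarized K3 surface $(X,L)$ of degree $d$ the bundle $L^{\otimes 3}$ is very ample and projectively normal, embedding $X$ into $\bpp^N$ with $N+1=h^0(X,L^{\otimes 3})=\chi(L^{\otimes 3})=\frac{9d}{2}+2$ and Hilbert polynomial $P(m)=\frac{9m^2d}{2}+2$, both independent of $(X,L)$. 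Inside $\mathrm{Hilb}^P(\bpp^N)$ let $H$ be the locus of nondegenerate closed subschemes, embedded by the complete linear system, that are smooth K3 surfaces whose $\mo(1)$ is the third power of a primitive class $L$ (necessarily with $L^2=d$); since $\mathrm{Pic}(X)$ is torsion free such a cube root is unique, and these conditions cut out a locally closed, finite type subscheme by a standard computation with the relative Picard scheme. The group $G=\mathrm{PGL}_{N+1}$ acts on $H$, there is an isomorphism $\bm_d\cong[H/G]$, and this exhibits $\bm_d$ as an algebraic stack of finite type over $\bc$, in particular locally of finite type.

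Next I would check the Deligne--Mumford property and finiteness of inertia. The automorphism group scheme of $(X,L)$ is the stabilizer in $G$ of the corresponding point of $H$, and I claim it is finite and reduced. Finiteness follows from the classical fact that $\mathrm{Aut}(X)\hookrightarrow O(H^2(X,\bz))$ for a K3 surface, under which $\mathrm{Aut}(X,L)$ lands in the stabilizer of the ample class $[L]$; this stabilizer is the intersection of an arithmetic group with the compact stabilizer of a positive vector in a form of signature $(1,\rho-1)$, hence finite. Reducedness is automatic in characteristic $0$, so the diagonal is unramified and $\bm_d$ is DM. For finite inertia I would first establish separatedness via the valuative criterion, namely that a polarized K3 surface over the fraction field of a DVR has a unique limit, since a K3 is its own minimal model and one may invoke a Matsusaka--Mumford type uniqueness for the polarized degeneration. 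Separatedness makes the inertia morphism $I_{\bm_d}\to\bm_d$ proper, and being also unramified with finite fibres it is finite.

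Smoothness I would deduce from unobstructedness of deformations of the pair. Since $K_X$ is trivial one has $T_X\cong\Omega^1_X$, so $H^0(X,T_X)\cong H^0(X,\Omega^1_X)=0$, $H^1(X,T_X)\cong H^1(X,\Omega^1_X)=\bc^{20}$, and by Serre duality $H^2(X,T_X)\cong H^0(X,\Omega^1_X)^{*}=0$; hence the deformations of $X$ are unobstructed of dimension $20$ (the simplest case of Bogomolov--Tian--Todorov). Deforming the pair imposes that $c_1(L)$ remain algebraic, the single condition cut out by the surjection $H^1(X,T_X)\to H^2(X,\mo_X)=\bc$ given by contraction with $c_1(L)$; as this differential is surjective, the polarized deformation functor is smooth of dimension $19$. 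Equivalently $H$ is smooth at each of its points, whence $[H/G]$ is smooth, and $\bm_d$ is smooth of dimension $19$.

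The step I expect to be the main obstacle is connectedness, which is genuinely global and rests on the theory of periods. I would pass to the coarse space $X$ and use the period map: marked polarized K3 surfaces of degree $d$ have periods in an irreducible type IV domain $D$ attached to the primitive cohomology lattice, the local Torelli theorem makes the period map a local isomorphism, and the global Torelli theorem together with surjectivity of periods identifies $X$ with an open dense subset of an arithmetic quotient $\Gamma\backslash D$. Since $D$ is connected and $\Gamma$ acts by biholomorphisms, $\Gamma\backslash D$ is irreducible, hence so is $X$ and therefore $\bm_d$; equivalently, one invokes that any two polarized K3 surfaces of the same degree are deformation equivalent. Everything outside this lattice-theoretic and Torelli input is comparatively formal, so I regard the connectedness assertion as the heart of the statement.
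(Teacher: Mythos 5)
The paper does not actually prove this statement: it is recorded as a known result, with the details deferred to Huybrechts' book and to Rizov's paper on moduli of K3 surfaces. Your sketch is, in substance, exactly the argument given in those references --- the presentation of $\bm_d$ as $[H/PGL_{N+1}]$ for a locally closed locus $H$ in a Hilbert scheme via Saint-Donat's very ampleness of $L^{\otimes 3}$, finiteness and reducedness of automorphism groups for the DM property, Matsusaka--Mumford for separatedness and hence finiteness of the (proper, unramified) inertia, unobstructedness of polarized deformations for smoothness of dimension $19$, and the period map for connectedness --- so there is nothing to compare it against except the cited sources, with which it agrees. The numerology ($N+1=\tfrac{9d}{2}+2$, $P(m)=\tfrac{9m^2d}{2}+2$) and the deformation-theoretic computation ($H^0(T_X)=H^2(T_X)=0$, the surjection $H^1(T_X)\to H^2(\mo_X)$ cutting out the polarized locus) are correct.

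Two small points of looseness, neither fatal for a sketch. First, your finiteness argument for $\mathrm{Aut}(X,L)$ only controls the action on the N\'eron--Severi lattice (stabilizer of a positive vector in signature $(1,\rho-1)$); you also need that the induced Hodge isometries of the transcendental lattice form a finite (cyclic) group, together with faithfulness of $\mathrm{Aut}(X)\to O(H^2(X,\bz))$, before you can conclude. Second, the type IV period domain attached to a lattice of signature $(2,19)$ has two connected components; connectedness of the arithmetic quotient uses that the relevant monodromy group interchanges them (equivalently, the uniqueness up to $O(\Lambda)$ of the primitive embedding of $\langle d\rangle$ into the K3 lattice), which is the genuine lattice-theoretic input you correctly identify as the heart of the connectedness claim.
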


So we have a coarse moduli space $M_d$. It is integral of dimension 19, but it is not smooth.

Next we consider the case when $d=2$. We first list some properties of polarized K3 surfaces $(X,L)$ with degree 2 as an example here.

\begin{ex}\label{2}
Given a polarized K3 surface $(X,L)$ with $(L^2)=2$. Then we know that the global sections of $L$ gives a double cover $\pi: X\to \pr^2$, and $\pi$ is ramified along a smooth curve $C\subseteqq\pr^2$ os degree 6. And we know that a general curve of degree 6 in $\pr^2$ has trivial automorphism, then by the standard description of double covers we know that the automorphism of a general polarized K3 surface $(X,L)$ of degree 2 has automorphism group $\bz/2\bz$. So we know that in this case if we denote the field of rational functions of $M_2$ by $K$, then the fiber product $\bm_{2,K}=\bm_2\times_{M_2}\bs K$ is a $\bz/2\bz$ gerbe over $K$.
\end{ex}

So with the previous example and Theorem \ref{gen} we see to find the essential dimension of $\bm_2$ over $k$ we just need to consider the essential dimension of $\bm_{2,K}$ over $K$, then by Theorem \ref{tool} we just need to find the index of the class of $[\bm_{2,K}]$ in $H^2(K,\mu_2)$.

\begin{theorem}
The gerbe $\bm_{2,K}$ is a trivial gerbe, so it has index 1, hence $\bd \bm_2=20$.
\end{theorem}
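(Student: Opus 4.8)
The plan is to trivialize $\bm_{2,K}$ by writing down an explicit degree $2$ polarized K3 surface over $K$ representing the generic point, i.e. a section of $\bm_{2,K}\to\bs K$. Recall from Example \ref{2} that such a surface $(X,L)$ is the double cover $\pi\colon X\to\pr^2$ determined by $|L|$, branched along a smooth plane sextic, and that the band of $\bm_{2,K}$ is the group $\langle\iota\rangle\cong\mu_2$ generated by the covering involution $\iota$. The key point I want to exploit is that $\iota$ interacts trivially with the polarization: the sections of $L=\pi^*\mo_{\pr^2}(1)$ are pulled back from $\pr^2$ and $\pi\circ\iota=\pi$, so $\iota$ acts as the identity on $H^0(X,L)$.

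First I would consider the universal surface $g\colon\mathcal X\to\bm_{2,K}$ with its polarization $\mathcal L$, and the rank $3$ bundle $\mathcal W:=g_*\mathcal L$ on $\bm_{2,K}$. By the remark above the band $\mu_2$ acts trivially on $\mathcal W$, i.e. $\mathcal W$ has weight $0$; since pullback $p^*$ along $p\colon\bm_{2,K}\to\bs K$ is an equivalence from $K$-vector spaces onto the weight $0$ part of $\mathrm{QCoh}(\bm_{2,K})$, the bundle $\mathcal W$ descends to a genuine $3$-dimensional $K$-vector space $W$. In particular the associated bundle $\bpp(\mathcal W^\vee)$ descends to $\pr^2_K=\bpp(W^\vee)$, so there is no Severi--Brauer twisting. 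Realising $\mathcal X$ as the double cover $\mathcal X\to\bpp(\mathcal W^\vee)$ and writing its pushforward as $\mo\oplus\mo(-3)$, on which $\iota$ acts by $+1$ and $-1$ respectively, I would then observe that the branch form $f\in\mathrm{Sym}^6 W$ (the multiplication $\mo(-6)\to\mo$) has weight $0$, and hence also descends to $K$.

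With $W$ and $f$ now defined over $K$, I would reverse the construction: let $X_0\to\pr^2_K=\bpp(W^\vee)$ be the double cover branched along $\{f=0\}$ and set $L_0=\mo_{X_0}(1)$. Then $(X_0,L_0)$ is a degree $2$ polarized K3 surface over $K$, and base-changing to $\overline K$ it recovers the geometric generic surface, so its moduli point is the generic point of $M_2$. Thus $(X_0,L_0)$ is a section of $\bm_{2,K}\to\bs K$, which shows the gerbe is neutral, so its class in $H^2(K,\mu_2)$ vanishes; hence $ind([\bm_{2,K}])=1$ and Theorem \ref{tool} gives $\bd_K\bm_{2,K}=1$. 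Finally Theorem \ref{gen} yields $\bd\,\bm_2=\dd M_2+\bd_K\bm_{2,K}=19+1=20$.

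The step I expect to be the main obstacle is the descent argument: making precise that the weight $0$ components $W$ and $f$ are genuinely defined over $K$ and fit together to give the sextic over $K$. This is where the special feature of the double cover, namely that $\iota$ fixes the polarization and acts trivially on $H^0(L)$, is essential, and it is exactly what forces the Severi--Brauer surface $\bpp(W^\vee)$ to be split (a priori one only sees that its class has period dividing $3$). One must also check the routine geometric inputs at the generic point: that the generic degree $2$ K3 has Picard rank $1$ with $h^0(L)=3$ and smooth branch sextic, so that the universal double-cover description is valid there and the reconstructed $(X_0,L_0)$ indeed hits the generic point rather than a special one.
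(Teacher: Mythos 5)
Your overall strategy (trivialize the gerbe by exhibiting an explicit $K$-point) is the same as the paper's, but your implementation contains a step that is not merely unjustified but false: the claim that $\mathcal{W}=g_*\mathcal{L}$ descends to an honest $3$-dimensional $K$-vector space, so that the Severi--Brauer surface is split. Your weight-$0$ computation is correct as far as it goes: since $\pi\circ\iota=\pi$, the involution acts trivially on $\bpp(H^0(X,L))$, and this shows that the projective bundle $\bpp(\mathcal{W}^\vee)$ descends from the gerbe to a Brauer--Severi surface $P$ over $K$ together with the sextic $C\subseteqq P$. But to descend $\mathcal{W}$ itself you first need $\mathcal{L}$ (equivalently $\mathcal{W}$) to exist as an actual sheaf on the universal family over $\bm_{2,K}$, and this is obstructed: the universal polarization is only a section of the relative Picard functor, and the obstruction to representing it by a line bundle is precisely the class of $P$ in $Br(K)$. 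So the argument is circular at the decisive point: you assume the sheaf exists in order to prove the splitting that is needed for the sheaf to exist.

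Moreover $P$ is genuinely non-split, so this step cannot be repaired. Indeed $[P]=\partial[T]\in H^2(K,\mu_3)$, where $T$ is the generic fibre of $U\to U/PGL_3$ and $U$ is the space of smooth plane sextics. Writing $V$ for the standard $3$-dimensional representation, the $PGL_3$-action on $\bpp(\mathrm{Sym}^6V^{\vee})$ lifts to a generically free \emph{linear} action on $W_0=\mathrm{Sym}^6V^{\vee}\otimes(\det V)^{\otimes2}$, so the generic torsor for $W_0$ is a versal $PGL_3$-torsor, and it is the restriction of $T$ along $K\subseteqq k(W_0/PGL_3)$. If $\partial[T]$ vanished, then after shrinking the base (using that for a smooth integral $S$ the kernel of $H^2(S,\mu_3)\to H^2(k(S),\mu_3)$ is $\mathrm{Pic}(S)/3$) every $PGL_3$-torsor over every field extension of $\bc$ would have trivial boundary class, contradicting the existence of degree-$3$ division algebras over, say, $\bc(x,y)$. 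Hence $[P]\neq0$, and a priori period $3$ is in fact realized. This is exactly why the paper's proof does not try to split $P$: it produces the $K$-point as the double cover of the \emph{non-split} $P$ branched over $C$, using that $\mo_P(C)=\mo_P(6)$ has the canonical square root $\omega_P^{-1}=\mo_P(3)$, which is defined over $K$ even though $\mo_P(1)$ is not. Your observation that $\iota$ acts trivially on $\bpp(H^0(L))$ correctly recovers the paper's starting data $(P,C)$ over $K$, but the endgame must go through $\omega_P^{-1}$ rather than through a splitting of $P$.
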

\begin{proof}
From Example \ref{2} we know that the moduli space $M_2$ is just the quotient of the space of smooth curves of degree 6 in $\pr^2$ by the $PGL_3$ action. So there exists a Brauer-Severi surface $P$ over $K$ and a degree 6 curve $C\subseteqq P$. Then the generic gerbe is just the stack of the double covers $Y\to P$ which ramified over $C$. To prove the gerbe is trivial it suffices to show $\bm_{2,K}(K)$ is non-empty. But by the description of double covers we just need to find a square root of $\mo(C)$. Then since $\bP(P)\cong\bz$, we just need to find a line bundle of degree 3. But the inverse of the canonical line bundle has degree 3, so the gerbe is trivial, we finish the proof. 
\end{proof}

This gives the answer in the case of $d=2$.

\section{The case when $d>2$}

Next we consider the case when the degree is greater than 2. We first proof a technical lemma.

\begin{lem}\label{tech}
Let $\bx$ be a smooth connected DM stack with finite inertia locally of finite type over $k$. If there exists a $k$ point $x$ on $\bx$ such that the automorphism group $G_x$ is trivial, then there exists an open dense subset $\mathcal{U}$ of $\bx$ such that for any point on $\mathcal{U}$, it will have trivial automorphism group, hence the generic point has trivial automorphism group.
\end{lem}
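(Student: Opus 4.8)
The plan is to work with the inertia stack and isolate the identity automorphism as a connected component. Write $I_{\bx}=\bx\times_{\bx\times_k\bx}\bx$ for the inertia stack and let $\pi\colon I_{\bx}\to\bx$ be the projection. By hypothesis $\pi$ is finite, and since $\bx$ is a DM stack its diagonal is unramified; as $\pi$ is a base change of the diagonal along itself, $\pi$ is unramified as well. The fiber of $\pi$ over a point $y$ of $\bx$ is precisely the automorphism group scheme $G_y$, so $G_y$ is trivial exactly when $\pi^{-1}(y)$ is a single reduced point.

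First I would exhibit the identity automorphism as an open and closed substack. There is a canonical identity section $e\colon\bx\to I_{\bx}$ with $\pi\circ e=\mathrm{id}_{\bx}$. Because $\pi$ is separated (being finite), the section $e$ is a closed immersion; and because $\pi$ is unramified, the section $e$ is an open immersion (a section of an unramified morphism is the base change of the open immersion $\Delta_{\pi}$, hence itself an open immersion). Thus $e(\bx)$ is a clopen substack of $I_{\bx}$, and its complement $J:=I_{\bx}\setminus e(\bx)$ is a closed substack which remains finite over $\bx$ via $\pi$.

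Next I would push forward the locus of nontrivial automorphisms. Since $\pi|_J\colon J\to\bx$ is finite, hence proper and closed, its image $Z:=\pi(J)$ is a closed substack of $\bx$. By construction a point $y$ lies in $Z$ if and only if $\pi^{-1}(y)$ contains a non-identity point, i.e. if and only if $G_y$ is nontrivial. The given $k$-point $x$ has $G_x$ trivial, so $x\notin Z$; therefore $\mathcal{U}:=\bx\setminus Z$ is a nonempty open substack, and every point of $\mathcal{U}$ has trivial automorphism group. Finally, since $\bx$ is smooth and connected it is irreducible, so the nonempty open $\mathcal{U}$ is dense and in particular contains the generic point, which therefore has trivial automorphism group.

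The main technical point to justify carefully is that the identity section is clopen: this rests on the two facts that sections of separated morphisms are closed immersions and that sections of unramified morphisms are open immersions, the latter being exactly where the DM (unramified diagonal) hypothesis is used. The finiteness assumption on the inertia is what then upgrades ``closed complement in $I_{\bx}$'' to a genuinely closed image $Z$ in $\bx$, which is the crux of the openness of $\mathcal{U}$.
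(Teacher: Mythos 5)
Your proof is correct, and it takes a genuinely different route from the paper's. The paper argues via an \'etale chart: it pulls the inertia back to an affine scheme $G_U=\bs B$ over an affine connected chart $U=\bs A$, regards $B$ as a coherent sheaf $\cf$ on $U$, invokes upper semi-continuity of $u\mapsto\dd\,\cf_u\otimes k(u)$ to cut out the open locus where the fiber rank is $1$, and then shrinks further until $\cf$ trivializes, so that the fibers are literally $\bs k$. You instead argue intrinsically on $\mathcal{I}_{\bx}$ itself: the identity section is a closed immersion because the finite projection is separated, and an open immersion because the projection is unramified (being a base change of the diagonal, which is unramified precisely because $\bx$ is DM); hence the complement of the identity section is closed, still finite over $\bx$, and has closed image $Z$, whose complement is the desired open $\mathcal{U}$. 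Both arguments use finiteness of the inertia at the crucial step (the paper to get a coherent pushforward with semicontinuous fiber rank, you to get properness and hence closedness of the image of $J$), and both use smoothness only to pass from ``nonempty open'' to ``dense'' via irreducibility. Your version has the advantage of making explicit exactly where the DM hypothesis enters (openness of the identity section fails for non-reduced automorphism group schemes, which is also why the paper's rank-one criterion correctly detects triviality only in the unramified setting), and it avoids choosing a chart; the paper's version is more elementary, resting only on Hartshorne's semicontinuity exercise. One small point worth spelling out in your write-up: to identify $\bx\setminus Z$ with the locus of trivial automorphism groups you should note that, since $\pi$ is unramified, each fiber $G_y$ is reduced, so ``the fiber meets only the identity section'' really is equivalent to ``$G_y$ is trivial as a group scheme''; you implicitly use this when you say $G_y$ is trivial exactly when $\pi^{-1}(y)$ is a single reduced point.
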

\begin{proof}
The automorphism group $G_x$ is defined by the pull-back
$$G_x=\bs k\times_{\bx}\mathcal{I}_{\bx}$$ 
where $\mathcal{I}_{\bx}$ is the inertia stack of $\bx$. We pick some etale cover $U\to\bx$ of $\bx$, then we have $U$ is smooth. Then there exists a $k$ point $u$ on $U$ such that the composition 
$$u\to U\to\bx$$ is just the given $k$ point (here we use that $k$ is algebraically closed). We may choose an affine connected open neighborhood of $u$, also denoted by $U$. Write 
$$G_U=U\times_{\bx}\mathcal{I}_{\bx}$$
Since $\mathcal{I}_{\bx}$ is finite over $\bx$, by definition $G_U$ is also an affine scheme over $k$. Also we know for any dense open subset of $U$ its image in $\bx$ will be a dense open set. So we just need to prove that there exist an open dense sub scheme $W$ of $U$ such that for any point of $W$ the fiber is just $\bs k$.

Let's denote the morphism $G_U\to U$ by $\pi$, and the point with trivial automorphism group by $x$. Write $G_U=\bs B$, $U=\bs A$, the maximal ideal corresponding to $x$ by $\km$, then we have $B/\km B\cong k$. Consider the sheaf associated to $B$ on $U$, we call it $\cf$. Then $\cf$ is coherent. We have 
$$\dd \cf_x\otimes\mo_{U,x}/\km_x=1$$
But by \cite{AG} Ex. 5.8 on page 125, the function
$$\phi(u)=\dd\cf_u\otimes\mo_{U,u}/\km_u$$
is an upper semi-continuous function. Since the fiber over any point is non-empty, so the set
$$W=\{u\in U|\phi(u)=1\}$$
is an open sub scheme of $U$. Since $U$ is integral, $W$ is dense.

By the same exercise, since $U$ is integral, so on $W$, $\cf_W$ is a line bundle. Then we may choose an open subscheme of $W$ (just shrink $W$, so we still use $W$ to denote it) such that on it we have $\cf_W\cong\mo_W$. Then on this subset, it is obvious that every point has fiber $\bs k$. This dense open sub scheme satisfies the property we want.
\end{proof}

By the above lemma, since $\bm_d$ satisfies all the property we need, if we find a polarized K3 surface with trivial automorphism, then the generic point must has trivial automorphism group. But we know there must exist a polarized K3 surface with Picard number 1 of any degree, then by the following theorem, we can get our result.

\begin{theorem}\label{111}
If $X$ is a complex projective K3 surface with $\bP(X)=\bz H$ and $(H^2)>2$, then $Aut(X)=id$.
\end{theorem}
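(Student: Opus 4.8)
The plan is to translate the statement into lattice theory by way of the Global Torelli theorem, and then to analyse a hypothetical nontrivial automorphism according to its action on the holomorphic two-form.

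First I would record two standard reductions. By the strong Torelli theorem the representation $\mathrm{Aut}(X)\to O(H^2(X,\bz))$, $f\mapsto f^{*}$, is injective, and its image consists of Hodge isometries preserving the ample cone; so it suffices to understand these isometries. Moreover, since $H^1(X,\mo_X)=0$ we have $\bP(X)=\bn(X)$ and $H^0(X,T_X)=0$, so $\mathrm{Aut}(X)$ is discrete; because $\bn(X)=\bz H$ has rank one, $O(\bn(X))=\{\pm1\}$ is finite, and the kernel of $\mathrm{Aut}(X)\to O(\bn(X))$ consists of automorphisms fixing $H$, hence lies in the (finite) automorphism group of the polarized surface $(X,H)$. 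Therefore $\mathrm{Aut}(X)$ is finite, and by Cauchy's theorem it is enough to show it contains no element of prime order $p$.

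So I would assume $h\in\mathrm{Aut}(X)$ has prime order $p$ and set $g=h^{*}\neq\mathrm{id}$. As $g$ preserves $\bn(X)=\bz H$ and maps ample classes to ample classes, $g(H)=H$, so $g$ is the identity on $\bn(X)$. Writing $H^{2,0}(X)=\bc\,\omega$, the Hodge isometry $g$ satisfies $g\omega=\lambda\omega$ with $\lambda^{p}=1$, and I split into two cases. If $\lambda=1$ (the symplectic case), then $\omega$ is fixed, the transcendental lattice lies in the invariant sublattice, and the coinvariant lattice $(H^2(X,\bz)^{g})^{\perp}$ is forced into $\bn(X)$; but Nikulin's theorem on symplectic automorphisms says this coinvariant lattice is negative definite of rank at least $8$, contradicting $\mathrm{rank}\,\bn(X)=1$. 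If $\lambda\neq1$ (the non-symplectic case), then $\lambda$ is a primitive $p$-th root of unity, and for any integral $v$ with $gv=v$ one gets $v\cdot\omega=gv\cdot g\omega=\lambda(v\cdot\omega)$, forcing $v\cdot\omega=0$ and hence $v\in\bn(X)$; thus the invariant lattice is exactly $\bz H$, and $g$ acts on the rank-$21$ coinvariant lattice with no eigenvalue $1$. Integrality makes its eigenvalues a union of full Galois orbits of primitive $p$-th roots of unity, so $(p-1)\mid 21$, and since $21=3\cdot 7$ the only prime possibility is $p=2$.

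The remaining and most delicate point — the one that separates degree $2$ from higher degree — is to eliminate $p=2$ exactly when $(H^2)>2$. For a non-symplectic involution, $g$ acts as $-\mathrm{id}$ on the rank-$21$ coinvariant lattice and as $+\mathrm{id}$ on $\bz H=\langle H^2\rangle$. Since $g$ preserves the unimodular lattice $H^2(X,\bz)$, it must respect the gluing data identifying the discriminant groups $\bz/(H^2)$ of $\bz H$ and of its orthogonal complement; comparing the two induced actions forces $+\mathrm{id}=-\mathrm{id}$ on $\bz/(H^2)$, i.e. $H^2\mid 2$, whence $(H^2)=2$, contrary to hypothesis. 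I expect this discriminant-group compatibility to be the main obstacle to make fully rigorous, since it is precisely where the extra covering involution of the degree-$2$ double plane appears and must be shown not to survive in higher degree; the symplectic case, by contrast, is essentially a citation of Nikulin's rank bound.
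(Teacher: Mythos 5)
The paper does not actually prove this statement: it is quoted verbatim from the literature (Huybrechts, \emph{Lectures on K3 Surfaces}, the corollary cited in the text), so your proposal has to be measured against that standard argument rather than against anything written here. Your proof is correct, and it is essentially that standard argument with a slightly more roundabout organization. The cited proof skips your Cauchy reduction and the symplectic/non-symplectic dichotomy altogether: any $f\in\mathrm{Aut}(X)$ fixes $H$ and hence acts trivially on $\bn(X)$, so by Torelli it acts nontrivially on the transcendental lattice $T(X)$ unless $f=\mathrm{id}$; since $T(X)\otimes\bq$ is an irreducible Hodge structure, the minimal polynomial of $f^*|_{T(X)}$ is irreducible, so all its eigenvalues are primitive $m$-th roots of unity where $m$ is the order, giving $\varphi(m)\mid\mathrm{rank}\,T(X)=21$; as $\varphi(m)$ is even for $m>2$ and $21$ is odd, $m\le 2$, and then the same discriminant-group gluing you invoke forces $(H^2)\mid 2$. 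Two remarks on your version. First, your symplectic case does not need Nikulin's rank bound: a finite-order symplectic automorphism acts trivially on $T(X)$, hence trivially on the finite-index sublattice $\bn(X)\oplus T(X)$ of the torsion-free lattice $H^2(X,\bz)$, hence trivially on all of $H^2(X,\bz)$, and injectivity of the Torelli representation finishes it. Second, the step you flag as the ``most delicate point'' is in fact completely routine: for a primitive sublattice $M$ of a unimodular lattice with orthogonal complement $N$, the glue map $A_M\to A_N$ is an isomorphism equivariant for any isometry preserving both pieces, so $+\mathrm{id}=-\mathrm{id}$ on $\bz/(H^2)$ follows immediately and yields $(H^2)\le 2$; this is exactly how the reference concludes, and it is also precisely where the covering involution of the degree-$2$ double plane survives. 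So: correct proof, same essential mechanism as the source the paper cites, with an avoidable detour through Nikulin in the symplectic case.
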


For proof, see \cite{huybrechts2016lectures}, Corollary 12.2.12.

Combine this theorem and the above technical lemma and Theorem \ref{gen}, we conclude

\begin{theorem}
The essential dimension of $\bm_d$ is 19 when $d>2$.
\end{theorem}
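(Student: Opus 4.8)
The plan is to combine the three ingredients the paper has already assembled: the generic-gerbe formula of Theorem \ref{gen}, the trivial-automorphism criterion of Lemma \ref{tech}, and the rigidity result of Theorem \ref{111}. The strategy is to show that the \emph{generic} point of $\bm_d$ has trivial automorphism group; once this is established, the second corollary to Theorem \ref{gen} immediately gives $\bd \bm_d = \dd M_d$, and we already know from the discussion following Theorem \ref{K3moduli} that $M_d$ is integral of dimension $19$.

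First I would produce a single $k$-point of $\bm_d$ with trivial automorphism group. By the standard theory of K3 surfaces over $\bc$, for every even $d$ there exists a complex projective K3 surface $X$ whose Picard lattice is $\bP(X) = \bz H$ with $(H^2) = d$; this follows from the surjectivity of the period map together with the Torelli theorem, which let one realize any admissible rank-one polarized Hodge structure. Since $d > 2$, Theorem \ref{111} applies and gives $\mathrm{Aut}(X) = \{\mathrm{id}\}$. As $H$ generates $\bP(X)$ it is in particular primitive and ample, so $(X, H)$ is a genuine polarized K3 surface of degree $d$, hence defines a $k$-point $x$ of $\bm_d$. Because the full automorphism group of $X$ is already trivial, the automorphism group of the \emph{polarized} object $(X,H)$ (a subgroup of $\mathrm{Aut}(X)$ preserving $H$) is a fortiori trivial, so $G_x$ is trivial.

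Next I would feed this point into Lemma \ref{tech}. By Theorem \ref{K3moduli} the stack $\bm_d$ is a smooth connected DM stack with finite inertia, locally of finite type over $k = \bc$, so all the hypotheses of the lemma are met. The lemma then yields a dense open substack $\mathcal{U} \subseteq \bm_d$ on which every point has trivial automorphism group; in particular the generic point of $\bm_d$ has trivial automorphism group. Applying Theorem \ref{gen} (with its hypotheses supplied again by Theorem \ref{K3moduli}, using that over a field of characteristic $0$ every DM stack is tame), the generic gerbe $\bm_{d,K}$ is trivial, so $\bd_K \bm_{d,K} = 0$ and therefore $\bd_k \bm_d = \dd M_d = 19$.

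The only genuinely substantive step is the existence of a rank-one polarized K3 surface of the prescribed degree, i.e.\ the input to Theorem \ref{111}; everything else is bookkeeping with the results already proved. I expect the main obstacle to be purely expository rather than mathematical: one must cite the surjectivity of the period map (or equivalently the existence of K3 surfaces with Picard number $1$ and prescribed polarization degree) cleanly, and one must be careful that the automorphism group relevant to the stack $\bm_d$ is that of the \emph{pair} $(X,H)$, which is controlled by $\mathrm{Aut}(X)$ and hence trivial here. Since the coarse space $M_d$ is only known to be integral of dimension $19$ and not smooth, I would also note that Theorem \ref{gen} is robust enough to give the answer from $\dd M_d$ alone, without any smoothness assumption on the coarse space.
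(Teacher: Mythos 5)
Your proposal is correct and follows essentially the same route as the paper: exhibit a degree-$d$ polarized K3 surface with Picard rank one, apply Theorem \ref{111} to get trivial automorphisms, propagate this to the generic point via Lemma \ref{tech}, and conclude with Theorem \ref{gen} that $\bd\bm_d=\dd M_d=19$. The only difference is that you spell out two details the paper leaves implicit (the surjectivity of the period map as the source of the rank-one example, and the passage from $\mathrm{Aut}(X)$ to the automorphism group of the pair $(X,H)$), which is a welcome clarification rather than a change of method.
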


\section{The case of positive characteristic}
 We will consider the moduli stack of polarized K3 surfaces in (large) positive characteristic in this section. Actually we will only consider the K3 surfaces over $\bF$, the algebraic closure of $\bF_p$ for some prime $p\geq23$ and $p\nmid d$. The idea is to apply the deformation theory of K3 surfaces and the result we got above. We first collect some results we need.
 
Recall that actually to show the essential dimension of some DM stack with finite inertia actually we just need to consider the "general case", so it suffices to consider the ordinary K3 surfaces. It suffices to find one point in the stack with trivial automorphism group, so let's concentrate on the K3 surfaces with height 1 and Picard number 20. Over $\bc$, K3 surfaces with Picard number 20 can be classified by the transcendental lattices, which have rank 2. More precisely, given any positive definite, oriented even lattice $M$ of rank 2, there exists a unique complex K3 surface $X_M$ with its transcendental lattice is just $M$. Let $S_p$ denote the set of positive definite even lattices with rank 2 such that the discriminant if a non-zero square mod $p$. Then for any such lattice $M$, there exists a complex K3 surface $X$ over $\bc$ with Picard number 20 and transcendental lattice $M$. Such $X$ is defined over $\bar{\bq}$. The reduction of $X$ over $\bF$ is a K3 surface with Picard number 20 (For details see the discussion in \cite{jang2013neron} section 3). And we have the following:

\begin{theorem}\label{classify}(\cite{jang2013neron}, Theorem 3.7)
The K3 surfaces over $\bF$ with Picard number are classified by $S_p$.
\end{theorem}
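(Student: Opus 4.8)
The plan is to exhibit a bijection between the two sides by passing through characteristic zero via the canonical lift of an ordinary K3 surface and the Shioda--Inose classification over $\bc$. First I would record that Picard number $20$ already forces height $1$: for a K3 surface over $\bF$ of finite height $h$ one has Artin's bound $\rho\le 22-2h$, while the supersingular surfaces have $\rho=22$; hence $\rho=20$ is possible only when $h=1$. Thus every K3 surface over $\bF$ appearing in the statement is ordinary, and I may freely use the theory of canonical (Serre--Tate / Nygaard) lifts of ordinary K3 surfaces over $W(\bF)$.

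Next I would attach to such an $X$ its classifying invariant. By the Néron--Severi group preserving lifting result of \cite{jang2013neron}, the canonical lift $\tilde X$ over $W(\bF)$ has specialization map $\bP(\tilde X)\to\bP(X)$ an isomorphism; fixing an embedding $W(\bF)\hookrightarrow\bc$ produces a complex singular K3 surface $\tilde X_{\bc}$ with $\rho=20$, whose transcendental lattice $T(X):=\bP(\tilde X_{\bc})^{\perp}\subset H^2(\tilde X_{\bc},\bz)$ is a positive definite, oriented, even lattice of rank $2$ (the orientation coming from the Hodge structure of the lift). By the Shioda--Inose classification recalled in the excerpt, the oriented lattice $T(X)$ is a complete invariant of $\tilde X_{\bc}$, and since the canonical lift is intrinsic to $X$, the isomorphism class of $T(X)$ is a well-defined invariant of $X$ itself.

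I would then identify the image of this invariant with $S_p$. A singular K3 surface has complex multiplication by an order in the imaginary quadratic field $\bq(\sqrt{-d})$, where $d=\det T(X)$, and its reduction at a prime above $p$ is ordinary exactly when $p$ splits in this field; since $X$ is ordinary this splitting holds, which is precisely the congruence condition defining $S_p$ (the exact sign being a matter of convention, as $H^2$ is unimodular so the discriminant of $\bP$ equals $\pm\det T$). Conversely, given $M\in S_p$ the complex surface $X_M$ of the excerpt is defined over $\bar{\bq}$, and the square condition guarantees good ordinary reduction at a suitable prime above $p$ with Picard number preserved at $20$; the reduction is then a K3 surface over $\bF$ with $\rho=20$ and transcendental lattice $M$, giving surjectivity of the invariant onto $S_p$.

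The remaining point, and the one I expect to be the main obstacle, is injectivity together with the functoriality of the lift. If $X,X'$ over $\bF$ have isomorphic oriented transcendental lattices, their canonical lifts, once embedded into $\bc$, are singular K3 surfaces with the same oriented transcendental lattice, hence isomorphic over $\bc$ by Shioda--Inose uniqueness; the real work is to descend such an isomorphism back to $\bF$, for which I would invoke the canonicity of the Serre--Tate lift --- isomorphisms of ordinary K3 surfaces lift canonically --- together with specialization, so that $\tilde X\cong\tilde X'$ forces $X\cong X'$. Making this reduction/lifting correspondence precise, in particular that the specialization of $\bP$ is an isomorphism exactly in the ordinary case and that it matches transcendental lattices, is the technical heart of the argument, and is where the cited work \cite{jang2013neron} supplies the essential input.
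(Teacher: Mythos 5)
The paper does not actually prove this statement: it is imported verbatim as Theorem 3.7 of the cited work of Jang, and the surrounding text only assembles the ingredients (canonical lifts of ordinary K3 surfaces, the rank-two transcendental lattice of the complex lift, the set $S_p$), so there is no in-paper argument to compare yours against. Judged on its own, your outline is a faithful reconstruction of the argument the reference actually runs: Artin's inequality $\rho\le 22-2h$ together with $\rho=22$ in the supersingular case forces height $1$, the Nygaard canonical lift is N\'eron--Severi preserving, the Shioda--Inose classification identifies the complex lift by its oriented transcendental lattice, and ordinarity of the reduction translates into splitting of $p$ in the CM field, which is exactly the square condition defining $S_p$ (your sign remark is the right one: the ``discriminant'' here is that of the associated binary quadratic form, so ``nonzero square mod $p$'' is literally the splitting condition). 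You also correctly isolate the two places where the real work sits: descending the complex isomorphism of canonical lifts back to $\bF$, which goes through because $H^0(X,T_X)=0$ makes the Isom-scheme unramified so isomorphisms spread out over a finite extension and specialize; and the fact that $X_M$ for $M\in S_p$ has good reduction at a prime above $p$ at all, which relies on the explicit Shioda--Inose/Kummer construction and is one reason the paper restricts to large $p$. In short: the proposal is a correct sketch of the external proof being cited, with the genuinely delicate steps named rather than filled in, which is consistent with how the paper itself treats the result.
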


For an ordinary K3 surface over $\bF$ with Picard number 20, it has a unique Neron-Severi preserved lifting, which is just the canonical lifting \cite{nygaard1983tate}. We have the following theorem comparing the automorphism group of the K3 surface itself and the Neron-Severi preserved lifting.

\begin{theorem}\label{aut}(\cite{jang2016lifting}, Theorem 3.7)
Let $X$ be a weakly tame K3 surface over $\bF$, then there exists a Neron-Severi group preserving lifting $\mathfrak{X}/W$, where $W$ is the Witt vector of $\bF$, such that the reduction map $Aut(\mathfrak{X}\otimes K)\to Aut(X)$ is an isomorphism. 
\end{theorem}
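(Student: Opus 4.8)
The plan is to analyse the reduction homomorphism $Aut(\mathfrak{X}\otimes K)\to Aut(X)$ in three stages --- constructing it, proving it is injective, and proving it is surjective --- with the last stage being the crux.

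First I would verify that the map is even well defined. The lift $\mathfrak{X}$ is smooth and proper over the discrete valuation ring $W$, with both fibres K3 surfaces. Given $\tilde\sigma\in Aut(\mathfrak{X}\otimes K)$, I would take the closure of its graph inside $\mathfrak{X}\times_W\mathfrak{X}$ to produce a birational self-map of $\mathfrak{X}/W$ restricting to $\tilde\sigma$ on the generic fibre. Because K3 surfaces are minimal --- they contain no $(-1)$-curves and coincide with their own minimal models --- a standard extension result for birational maps of smooth proper families whose special fibre is a non-ruled minimal surface (of Matsusaka--Mumford type, after arranging compatibility with a polarization) forces this birational map to be an isomorphism of $\mathfrak{X}$; restricting it to the special fibre yields an element of $Aut(X)$, and this specialization is the reduction map.

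For injectivity I would combine this extension with the rigidity of K3 automorphisms. Suppose $\tilde\sigma$ extends to an automorphism of $\mathfrak{X}/W$ whose reduction is the identity on $X$. Infinitesimal automorphisms of a K3 surface lifting the identity are governed by $H^0(X,T_X)$, which vanishes, so an induction on the order of the thickenings $\mathfrak{X}\otimes W/p^{n}$ shows that $\tilde\sigma$ is the identity on every infinitesimal neighbourhood of $X$. Since $W$ is complete and $\mathfrak{X}/W$ is proper, $\tilde\sigma$ must then be the identity on all of $\mathfrak{X}$, hence on $\mathfrak{X}\otimes K$.

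Surjectivity is where the genuine work lies, and I expect it to be the main obstacle. Here I would exploit the defining property of $\mathfrak{X}$ as the \emph{unique} Neron--Severi preserving, i.e. canonical, lift of the ordinary K3 surface $X$ in the sense of Nygaard and Ogus. Any $\sigma\in Aut(X)$ automatically preserves $NS(X)$ and commutes with the crystalline Frobenius on $H^2_{cris}(X/W)$. Pulling $\mathfrak{X}$ back along $\sigma$ produces a second lift of $X$ which, because $\sigma$ preserves $NS(X)$, is again Neron--Severi preserving; by the uniqueness of the canonical lift it is isomorphic to $\mathfrak{X}$, and the comparison isomorphism is precisely a lift $\tilde\sigma$ of $\sigma$ whose reduction returns $\sigma$. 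To make this rigorous I would route the argument through Ogus's crystalline Torelli theorem, which presents $Aut(X)$ as the group of isometries of $H^2_{cris}(X/W)$ preserving the canonical Hodge filtration and the ample cone, and compare it with the complex Torelli description of $Aut(\mathfrak{X}\otimes K)$ after a chosen embedding $K\hookrightarrow\bc$; the canonical lift is built exactly so that these two packages of cohomological data correspond. The delicate point --- and the place where the weak tameness hypothesis and the bound $p\geq 23$ enter --- is guaranteeing that $\sigma$ acts without wild ramification, so that the fixed-point and deformation analysis stays controlled and the crystalline period data is genuinely preserved; securing this compatibility is the heart of the theorem.
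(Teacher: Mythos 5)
The paper does not actually prove this statement: it is quoted from Jang (\cite{jang2016lifting}, Theorem 3.7) and used as a black box (``For the proof and the definition of `weakly tame', we refer to \cite{jang2016lifting}''), so there is no internal proof to compare against. Judged on its own terms, your construction of the reduction map (graph closure plus a Matsusaka--Mumford type extension for families whose fibres are minimal and non-ruled) and your injectivity argument (vanishing of $H^0(X,T_X)$, induction over the thickenings $\mathfrak{X}\otimes W/p^n$, then properness and completeness of $W$) are the standard arguments and are sound modulo the polarization bookkeeping you acknowledge.

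The gap is in surjectivity. Your argument hinges on $\mathfrak{X}$ being \emph{the} canonical lift of an \emph{ordinary} K3 surface, with uniqueness of the Neron--Severi preserving lift forcing $\sigma^*\mathfrak{X}\cong\mathfrak{X}$. But the theorem is stated for weakly tame K3 surfaces, which for $p>22$ means \emph{all} K3 surfaces of finite height; for non-ordinary finite-height surfaces there is no canonical lift in the Nygaard--Ogus sense, and NS-preserving lifts are in general neither unique (uniqueness needs Picard number $20$, as the paper itself is careful to say) nor automatically functorial. Jang's actual argument runs through the Nygaard--Ogus deformation theory of K3 crystals: NS-preserving lifts are parametrized by a space on which $Aut(X)$ acts through a finite quotient, and weak tameness --- that the order of the image of $Aut(X)$ acting on the ``transcendental'' part of the crystal is prime to $p$ --- is precisely what produces a fixed point, i.e. an equivariant lift to which every automorphism extends. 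Your appeal to Ogus's crystalline Torelli theorem is also misplaced: that theorem concerns supersingular K3 surfaces, which are excluded here; for finite height one lifts to characteristic $0$ and invokes the classical Torelli theorem. That said, for the only case this paper actually uses --- ordinary (height $1$) K3 surfaces of Picard number $20$ --- your canonical-lift argument does suffice, since there the NS-preserving lift is unique and coincides with the functorial canonical lift.
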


For the proof and the definition of "weakly tame", we refer to \cite{jang2016lifting}. Recall that if $p>22$, then every K3 surface of finite height is weakly tame. And by the standard argument we can show that in this case $Aut(\mathfrak{X}\otimes\bc)\cong Aut(X)$ and $NS(\mathfrak{X}\otimes\bc)\cong NS(X)$ canonically.

Now let's return to the polarized case.  Given the moduli stack $\bm_d$ of polarized K3 surfaces over $\bF$, let's assume the degree of the polarization is greater than 2. By Lemma \ref{tech}, if we can find a point on $\bm_d$ with trivial automorphism group, we can show the generic point has trivial automorphism group. 

We next need to use the theory of period domain. We suggest \cite{huybrechts2016lectures}, chapter 6 for the basic facts and properties. We have the following theorem:

\begin{theorem}
We fix the ground field to be $\bc$. In the moduli stack $\bm_d$, the set of K3 surfaces with Picard number 20 and the discriminant of the transcendental lattice is a non-zero square mod $p$ form a dense subset of $\bm_d(\bc)$.
\end{theorem}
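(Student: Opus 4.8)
The plan is to pass through the period domain and reduce the assertion to an elementary density statement about rational subspaces of a quadratic space. Write $\Lambda$ for the K3 lattice, $\Lambda_d = h^\perp$ for the primitive part attached to a degree-$d$ polarization (a lattice of signature $(2,19)$), and let
$$D=\{[\omega]\in\pr(\Lambda_d\otimes\bc)\mid \omega^2=0,\ \omega\cdot\bar\omega>0\}$$
be the associated period domain, a $19$-dimensional complex manifold, with $\Gamma\subset O(\Lambda_d)$ the relevant arithmetic group. By the surjectivity of the period map together with local Torelli (\cite{huybrechts2016lectures}, Ch.~6), the period map realizes $M_d$ as an open dense subset of the quotient $\Gamma\backslash D$; since the quotient map $D\to\Gamma\backslash D$ is open and $\bm_d(\bc)\to M_d$ is a homeomorphism on underlying spaces, density in $D$ of the locus in question descends to $\Gamma\backslash D$, and intersecting with the open dense image $M_d$ yields density in $\bm_d(\bc)$. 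So it suffices to prove the relevant period points are dense in $D$.

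Next I would identify the Picard-number-$20$ locus inside $D$ intrinsically. Sending $[\omega]$ to the oriented real $2$-plane $\langle\mathrm{Re}\,\omega,\mathrm{Im}\,\omega\rangle\subset\Lambda_d\otimes\br$ gives a real-analytic diffeomorphism of $D$ onto the open subset of the oriented Grassmannian $\mathrm{Gr}(2,\Lambda_d\otimes\br)$ consisting of positive-definite planes. Under this identification a period point has transcendental lattice of rank $2$ (equivalently Picard number $20$) exactly when the corresponding plane is defined over $\bq$, i.e. is a rational positive-definite $2$-plane $W$, in which case the transcendental lattice is the primitive closure $T=W\cap\Lambda_d$. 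Since $\mathrm{Gr}(2,\Lambda_d\otimes\bq)$ is dense in $\mathrm{Gr}(2,\Lambda_d\otimes\br)$ (rational planes approximate any real plane, exactly as $\bq$ is dense in $\br$) and positive-definiteness is an open condition, the rational positive-definite planes are dense in the positive-definite locus, and hence the Picard-number-$20$ period points are dense in $D$. This is consistent with there being only countably many of them: a countable set may perfectly well be dense.

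It remains to impose the arithmetic condition on $\mathrm{disc}(T)$ while preserving density, and this is the step I expect to be the main obstacle. For a rational plane $W=\langle u,v\rangle$ the quantity $u^2v^2-(u\cdot v)^2$ is controlled by the chosen basis, but the invariant we must pin down is $\mathrm{disc}(W\cap\Lambda_d)$, which depends on the primitive closure rather than on an arbitrary basis; the plan is to show that, given any plane and any neighborhood, one can choose a nearby rational positive-definite plane whose primitive lattice $T$ has $\mathrm{disc}(T)$ a nonzero square modulo $p$. Small rational perturbations of $u,v$ move the Gram determinant through infinitely many integer values, and among large integers a fixed positive proportion are nonzero squares mod $p$, so suitable values are attainable arbitrarily close to any prescribed plane; keeping careful track of the primitive closure as $W$ varies is what makes this delicate. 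Granting this, the refined set is still dense in $D$, a dense subset of it lands in the period image $M_d$ and hence corresponds to genuine degree-$d$ polarized K3 surfaces, and projecting to $\bm_d(\bc)$ gives the claim.
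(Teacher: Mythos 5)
Your reduction is exactly the paper's: pass to the period domain $D_d$, use that $M_d$ is open in $D_d/O_d$, identify $D_d$ with the open locus of positive-definite oriented planes in the Grassmannian of $2$-planes in $\Lambda_d\otimes\br$, and reduce to a density statement about rational planes whose transcendental lattice has discriminant a nonzero square mod $p$. But that density statement is the entire mathematical content of the paper's proof (its Theorem 5.4, a four-step lattice argument), and you explicitly do not prove it --- you write ``Granting this, the refined set is still dense.'' That is a genuine gap, not a detail: the heuristic you offer (perturbations move the Gram determinant through infinitely many integers, a positive proportion of which are squares mod $p$) does not work as stated. To stay inside a prescribed neighborhood the perturbation must shrink, so the accessible discriminants are not ``infinitely many integers'' but a one-parameter family of rationals of the form $A-By^2$ with $y$ ranging over $\bz/p$ (after reducing mod $p$), and one must actually prove that this specific quadratic family hits a nonzero square mod $p$. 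The paper needs preliminary reductions (arranging $(\omega_1,\omega_1)\not\equiv 0$, the discriminant nonzero mod $p$, and the existence of $\eta\perp H$ with $(\eta,\eta)\not\equiv 0$, each by a separate perturbation using vectors of $E_8(-1)$) before a case analysis on whether $B$ is a square mod $p$, the non-square case requiring a genuinely tricky argument with $-1$, $2$, and $3$.

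You also flag the primitive-closure issue as the delicate point but do not resolve it. The paper dissolves it with one observation you are missing: the discriminant of \emph{any} rational basis of $H\cap\Lambda_d$ differs from the discriminant of an integral (primitive) basis by the square of a rational number, so ``nonzero square mod $p$'' is independent of the choice of rational basis (for bases with $p$-integral, $p$-unit discriminant). Once you have this, there is no need to track the primitive closure as the plane varies --- you may compute with whatever convenient rational generators $\omega_1,\omega_2+\frac{1}{N}\eta$ the perturbation hands you. Without this observation and without an actual proof of the perturbation lemma, the argument does not close.
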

Before we start our proof, let's first recall some basic result about the period domain. Set the lattice $\Lambda_d=\bz(-d)\oplus U^{\oplus2}\oplus E_8(-1)^{\oplus2}$ and $(,)$ the quadratic form on it, where $U$ is the hyperbolic lattice and $E_8$ is the lattice associated to the Dynkin diagram $E_8$. This is the lattice orthogonal to the lattice generated by the ample line bundle of degree $d$ we choose in the standard K3 lattice, see \cite{huybrechts2016lectures} chapter 6.

So we have the period domain of marked polarized K3 surfaces with degree $d$, which we denoted by $D_d$. $D_d$ is a subset of $\bpp(\Lambda_{d\bc})$. The group of orthogonal matrix $O_d$ has a natural action on $D_d$, and we know that the coarse moduli space $M_d$ is just an open subset of the quasi-projective variety $D_d/O_d$. So to prove the above theorem, we just need show the points on $D_d$ corresponding to the mark polarized K3 surfaces with transcendental lattice has non-zero square mod p discriminant is dense in $D_d$. But since $D_d$ is diffeomorphic to the set of oriented planes in $\Lambda_d\otimes\br$ such that the restriction of the quadratic form on the plane is positive definite (\cite{huybrechts2016lectures}, Proposition 6.1.5), and the set of positive definite planes is an open subset of the set of all planes, we just need to prove the following theorem, for simplicity, we call the property of a lattice "with discriminant a non-zero square mod $p$" just property $R$.

\begin{theorem}\label{22}
The set of rationally generated planes in $\Lambda_d\otimes\br$ satisfying property $R$ forms a dense subset of the grassmannian $Gr(2,\Lambda_d\otimes\br)$.
\end{theorem}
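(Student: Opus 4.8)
The plan is to reduce the statement to a congruence question about the discriminant of the rank-two lattice cut out by a plane, and then to settle that congruence by a perturbation whose discriminant is an explicit quadratic polynomial. Throughout write $V=\Lambda_d\otimes\br$, and for a rational plane $P\subseteq V$ (one spanned by vectors of $\Lambda_d\otimes\bq$) let $M_P=P\cap\Lambda_d$ be the saturated rank-two lattice it determines; property $R$ for $P$ means that $\det(M_P)$ is a nonzero square modulo $p$. Two standing observations drive the argument. First, the rational planes are dense in $Gr(2,\Lambda_d\otimes\br)$, so it suffices to approximate an arbitrary plane by rational planes having property $R$. Second, since the discriminant of $\Lambda_d$ equals $-d$ and we assume $p\nmid d$, the form on $\Lambda_d\otimes\bF_p$ is nondegenerate of rank $21$; hence the restriction of the form to the reduction of any saturated sublattice of corank at most $2$ has rank at least $17$, and a nondegenerate quadratic form in at least three variables over $\bF_p$ represents every element of $\bF_p$.

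Next I set up the perturbation. Fix a target plane and choose, as close to it as desired, a rational plane $P_0=\langle v,w\rangle$ with $v,w\in\Lambda_d$ and $(v,v)\not\equiv 0\pmod p$ (an open, hence achievable, condition). Let $N=\{v,w\}^{\perp}\cap\Lambda_d$, a saturated lattice of rank $19$, and pick $u\in N$. For a positive integer $n$ consider the rational plane
$$P_n=\langle v,\;nw+u\rangle=\langle v,\;w+\tfrac1n u\rangle,$$
so that $P_n\to P_0$ as $n\to\infty$ and $P_n$ is eventually as close to the target as we wish. Because $u$ is orthogonal to both $v$ and $w$, every cross term involving $u$ drops out and the Gram determinant of the generating pair is
$$(v,v)\,(nw+u,\,nw+u)-\big((v,\,nw+u)\big)^2=D_0\,n^2+(v,v)(u,u),$$
where $D_0=\det(M_{P_0})=(v,v)(w,w)-(v,w)^2$.

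Now I solve the congruence and control saturation at the same time. Take $n\equiv 0\pmod p$, so the displayed Gram determinant is $\equiv(v,v)(u,u)\pmod p$. Since $(u,u)$ ranges over all of $\bF_p$ as $u$ runs through $N$ (the form on $N\otimes\bF_p$ has rank at least $17$, hence is universal), I may choose $u$ with $(v,v)(u,u)$ equal to a prescribed nonzero square modulo $p$; in particular $(u,u)\not\equiv 0$, so $u\not\equiv 0\pmod p$. As $nw+u\equiv u\pmod p$ and $(v,u)=0$ while $(v,v)\not\equiv 0$, the reductions of $v$ and $nw+u$ are linearly independent in $\Lambda_d\otimes\bF_p$ (a relation $u\equiv\lambda v$ would force $0=(v,u)\equiv\lambda(v,v)$, hence $\lambda\equiv 0$ and $u\equiv 0$). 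This independence says exactly that the index $j=[M_{P_n}:\langle v,nw+u\rangle]$ is prime to $p$. Since $\det\langle v,nw+u\rangle=j^2\det(M_{P_n})$ and $j^2$ is a nonzero square modulo $p$, we conclude that $\det(M_{P_n})$ is a nonzero square modulo $p$, i.e. $P_n$ has property $R$. Letting $n\to\infty$ through multiples of $p$ produces planes with property $R$ arbitrarily close to the target, which proves density; and since the positive definite planes form an open subset of the Grassmannian, the very same planes give density there as well.

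The step I expect to be the main obstacle is the bookkeeping relating $\det(M_{P_n})$ to the computable Gram determinant of the chosen generators: one must guarantee that the index $j$ is prime to $p$, so that passing to the saturation does not disturb the square class modulo $p$. The choice $n\equiv 0\pmod p$ is engineered precisely so that this primitivity reduces to the linear independence of $v$ and $u$ modulo $p$, which $(v,v)\not\equiv 0$ and $(u,u)\not\equiv 0$ secure; the other point needing care is the universality of the form on $N$ modulo $p$, where the hypothesis $p\nmid d$ (nondegeneracy of $\Lambda_d\otimes\bF_p$) is essential.
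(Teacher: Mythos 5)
Your proposal is correct, and it reaches the conclusion by a genuinely different and in fact cleaner route than the paper. The paper perturbs the second generator by $\frac{1}{N}\eta$ with $N$ prime to $p$, so the discriminant becomes $A-By^2$ with $y=N^{-1}$ ranging over $\bz/p$; it must therefore first arrange $A\not\equiv 0$ and manufacture an $\eta$ orthogonal to $H$ with $(\eta,\eta)\not\equiv 0$ (Steps 2--3, each by a separate contradiction argument), and then run a pigeonhole case analysis on whether $B$ is a square. Your choice $n\equiv 0\pmod p$ kills the $D_0n^2$ term modulo $p$ outright, so the residue of the Gram determinant is exactly $(v,v)(u,u)$, and everything reduces to representing one prescribed nonzero value by the form on the rank-$19$ saturated lattice $N$ modulo $p$; universality of a form of rank at least $17\ge 3$ over $\bF_p$ (which is where $p\nmid d$ enters) does this in one stroke and replaces the paper's Steps 2--4. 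You also handle a point the paper elides: passing from the Gram determinant of the chosen generators to the discriminant of the saturation multiplies by $j^{-2}$, and your verification that $v$ and $u$ are independent mod $p$, hence $p\nmid j$, is exactly what the paper's remark that a rational basis ``only differs by a square'' silently assumes. One small repair: the existence of a nearby rational plane with $(v,v)\not\equiv 0\pmod p$ is not literally an open condition on the Grassmannian; it needs the same one-line perturbation $v\mapsto Nv+\delta$ with $p\mid N$ and $(\delta,\delta)\not\equiv 0$ that the paper carries out in its Step 1.
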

\begin{proof}
Choose an open subset of $Gr(2,\Lambda_d\otimes\br)$. We first choose a rationally generated plane $H\in U$. And we notice that actually to check property $R$, it suffices to check the discriminant of a $rational\ basis$ of $H\cap \Lambda_d$. The reason is that the discriminant of such basis only differ with the discriminant of the integral basis by a square, so if the discriminant of a rational basis is a non-zero square mod $p$, the same is true for the integral basis. Also, when we write $\frac{1}{N}\in\bz/p$ for some $p\nmid N$, we just mean the inverse of $N$ in $\bz/p$.

Assume $H\cap\Lambda_d$ is rationally generated by $\omega_1,\omega_2$ in $\Lambda_d$. We first need to do some reductions:

$\mathbf{Step 1}$. We may assume $(\omega_1,\omega_1)\neq0$ in $\bz/p$. If not, we can consider the element $\delta\in\Lambda_d$ satisfying $(\delta,\delta)=-2$. Of course $\delta$ in not in $H$. Then consider the plane $H'$ generated by $(\omega_1+\frac{1}{N}\delta,\omega_2)$. For $N$ large enough, $H'$ is in $U$. And $$(\omega_1+\frac{1}{N}\delta,\omega_1+\frac{1}{N}\delta)=(\omega_1,\omega_1)+\frac{2}{N}(\omega_1,\delta)+\frac{1}{N^2}(\delta,\delta)$$
since $(\delta,\delta)\neq0\in\bz/p$, there must exists some $N$ such that $(\omega_1,\omega_1)+\frac{2}{N}(\omega_1,\delta)+\frac{1}{N^2}(\delta,\delta)\neq0$ in $\bz/p$. We we can replace $H$ by $H'$ to assume that $(\omega_1,\omega_1)\neq0$ in $\bz/p$.

$\mathbf{Step 2}$. We may assume $disc(\omega_1,\omega_2)$ is non-zero mod $p$. Consider $H'$ generated by $(\omega_1,\omega_2+\frac{1}{N}\delta)$ with any $(\delta,\delta)\neq0$ in $\bz/p$. Then we have $disc(\omega_1,\omega_2+\frac{1}{N}\delta)=((\omega_1,\omega_1)(\omega_2,\omega_2)-(\omega_1,\omega_2)^2)+\frac{2}{N}((\omega_1,\omega_1)(\omega_2,\delta)-(\omega_1,\delta)(\omega_1,\omega_2))+\frac{1}{N^2}((\omega_1,\omega_1)(\delta,\delta)-(\omega_1,\delta)^2)$. By the same reason, we just need to find some $\delta$ such that the leading coefficient $(\omega_1,\omega_1)(\delta,\delta)-(\omega_1,\delta)^2$ is non-zero in $\bz/p$. We prove the existence of such $\delta$ by contradiction. If for any $(\delta,\delta)\neq0$ in $\bz/p$, we have $(\omega_1,\omega_1)(\delta,\delta)-(\omega_1,\delta)^2$ is zero in $\bz/p$. Since $\Lambda_d$ contains $E_8(-1)$, so we can find $\delta_1,\delta_2$, with $(\delta_1,\delta_2)=0$ and $(\delta_i,\delta_i)=-2$ for $i=1,2$. We have $$(\omega_1,\omega_1)(\delta_i,\delta_i)-(\omega_1,\delta_i)^2$$ is zero for both $i$, and $(\omega_1,\omega_1)(\delta_1+\delta_2,\delta_1+\delta_2)-(\omega_1,\delta_1+\delta_2)^2$ is zero, we may conclude
$$(\omega_1,\omega_1)(\delta_1,\delta_2)=(\omega_1,\delta_1)(\omega_1,\delta_2)$$ in $\bz/p$. But $(\delta_1,\delta_2)=0$, so we may assume $(\omega_1,\delta_1)=0$ in $\bz/p$. But then $$(\omega_1,\omega_1)(\delta_1,\delta_1)-(\omega_1,\delta_1)^2\equiv(\omega_1,\omega_1)(\delta_1,\delta_1)$$
which is non-zero, which is a contradiction. By replacing $H$ by $H'$, we may assume the discriminant is non-zero.

$\mathbf{Step 3}$. We may assume there exists a $\eta\in\Lambda_d$ orthogonal to $H$ and $(\eta,\eta)$ is non-zero in $\bz/p$. From the previous two steps, we may assume $H$ is rationally generated by $\omega_1,\omega_2$ with $(\omega_i,\omega_i)\neq0$ in $\bz/p$ for $i=1,2$ and $(\omega_1,\omega_2)=0$ (just by diagonalizing the matrix). Pick any $\delta\in\Lambda_d$, we have $$\delta-\frac{(\omega_1,\delta)}{(\omega_1,\omega_1)}\omega_1-\frac{(\omega_2,\delta)}{(\omega_2,\omega_2)}\omega_2$$ is orthogonal to $H$. And
$$(\delta-\frac{(\omega_1,\delta)}{(\omega_1,\omega_1)}\omega_1-\frac{(\omega_2,\delta)}{(\omega_2,\omega_2)}\omega_2,\delta-\frac{(\omega_1,\delta)}{(\omega_1,\omega_1)}\omega_1-\frac{(\omega_2,\delta)}{(\omega_2,\omega_2)}\omega_2)$$
$$=(\delta,\delta)-\frac{(\omega_1,\delta)^2}{(\omega_1,\omega_1)}-\frac{(\omega_2,\delta)^2}{(\omega_2,\omega_2)}$$ 

If for some $\delta$ the above number is non-zero in $\bz/p$, we are done. If not, let's choose a $\eta$ orthogonal to $H$, also let's assume $\eta$ is primitive. Then $H'$ defined by $(\omega_1,\omega_2+\frac{1}{N}\eta)$ also satisfies the assumption we made in step 1 and step 2. If for this plane, we also have every $\delta$ orthogonal to $H'$ has $(\delta,\delta)\equiv0$, then we have
$$(\delta,\delta)-\frac{(\omega_1,\delta)^2}{(\omega_1,\omega_1)}-\frac{(\omega_2+\frac{1}{N}\eta,\delta)^2}{(\omega_2,\omega_2)}$$ is zero in $\bz/p$. Comparing with the previous equation we get $$(\eta,\delta)\equiv0$$ for any $\delta$. But this makes $\eta$ is divided by $p$ in $\Lambda_d$, which is a contradiction to the primitivity of $\eta$. So by replacing $H$ by $H'$ we may assume there is a $\eta$ orthogonal to $H$ with $(\eta,\eta)\neq0$ in $\bz/p$.

$\mathbf{Step 4}$. We finish the proof in this step. So far we have a plane $H\in U$ rationally generated by $\omega_1,\omega_2$ with $(\omega_i,\omega_i)\neq 0$ in $\bz/p$ and $(\omega_1,\omega_2)=0$. Set $A=disc(\omega_1,\omega_2)$. Then $A\neq0$ in $\bz/p$ in step 2. If $A$ is already a square, we are done. If not, by step 3, we choose a $\eta$ orthogonal to $H$ and $(\eta,\eta)\neq0$ in $\bz/p$. Consider $H'$ generated by $\omega_1,\omega_2+\frac{1}{N}\eta$ for large enough $N$. Then $$disc(\omega_1,\omega_2+\frac{1}{N}\eta)=A+\frac{1}{N^2}(\omega_1,\omega_1)(\eta,\eta)$$
Denote $B=-(\omega_1,\omega_1)(\eta,\eta)$, then $B$ is non-zero in $\bz/p$. And $$disc(\omega_1,\omega_2+\frac{1}{N}\eta)=A-By^2$$
here $y$ is the inverse of $N$ mod $p$ and can be 0, which means $H=H'$. We just need to show for some $y$, $A-By^2$ is a square mod $p$. We separate into 2 cases:\\
(1) If $B$ is a square mod $p$, consider the set $S=\{0,1,2,...,\frac{p-1}{2}\}$. Then for any $y_1,y_2\in S$, $A-By_1^2\neq A-By_2^2$ mod $p$ unless $y_1=y_2$. But $A-By^2$ cannot be 0 in $\bz/p$ since $A$ is not a square. But we only have $\frac{p-1}{2}$ non-squares mod $p$, and $S$ contains $\frac{p+1}{2}$ elements. So there must be some $y$ that makes $A-By^2$ a non-zero square.\\
(2) If $B$ is not a square, then $A/B$ is a square. We prove by contradiction. If for any $y$, $A-By^2$ is zero or not a square mod $p$, then for any $y$, $\frac{A}{B}-y^2$ is a square (maybe 0), hence $1-y^2$ is a square for any $y$. The following is a little tricky: We notice $-1$ cannot be a square, or $y^2-1$ is a square then by induction every element in $\bz/p$ is a square, which is not true. Then $2$ cannot be a square, otherwise $1-2=-1$ is a square. Then $-2=-1\times2$ is a square. So $1-(-2)=3$ is a square. On the other hand, $\frac{1}{y^2}-1$ is a square for any $y\neq0$, in particular $-2-1=-3$ is a square, this makes $-1=\frac{-3}{3}$ is a square, which is a contradiction. We finish the proof.
\end{proof}

So by the above theorem, and Theorem \ref{111}, we can find a polarized K3 surface over $\bc$ with trivial automorphism group such that the transcendental lattice $T$ is in $S_p$. It's reduction is a polarized K3 surface $(X,L)$ over $\bF$. Then from Theorem \ref{classify} and \ref{aut}, we can conclude that $Aut(X,L)=\{id\}$. Then by Theorem \ref{tech}, we can conclude that the generic object of $\bm_d$ in char $p$ case also has trivial automorphism group if $d>2$.

In \cite{dolgachev2009finite}, it is proved that if $p>11$, then every automorphism of a K3 surface with finite order is tame, in particular, that means in that case the moduli stack $\bm_d$ is tame, so apply Theorem \ref{gen}, we have

\begin{theorem}
Let's assume $\bF$ is the algebraic closure of $\bF_p$ for $p\geq23$. Then the essential dimension of $\bm_d$ with $d>2$ and $p\nmid d$ is 19.
\end{theorem}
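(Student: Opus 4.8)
The plan is to reduce, exactly as in the characteristic zero case of Section 4, to the problem of exhibiting a single $\bF$-point of $\bm_d$ with trivial automorphism group, after which Lemma \ref{tech} and Theorem \ref{gen} do the rest. First I would record the structural facts: over $\bF$ the stack $\bm_d$ is still a smooth connected DM stack with finite inertia, locally of finite type (Theorem \ref{K3moduli} persists in this characteristic since $p\nmid d$), its coarse space $M_d$ is integral of dimension $19$, and it is tame because $p>11$ forces every finite-order automorphism of a K3 surface to be tame by \cite{dolgachev2009finite}. Granting a point with trivial automorphism group, Lemma \ref{tech} forces the generic point of $\bm_d$ to have trivial automorphism group, so the generic gerbe $(\bm_d)_K$ is trivial and Theorem \ref{gen} gives $\bd_{\bF}\bm_d=\dd M_d+\bd_K(\bm_d)_K=19+0=19$.

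The heart of the matter is producing such a point by reduction from characteristic zero. Working over $\bc$, Section 4 provides a dense open substack $\mathcal{U}\subseteq\bm_d$ on which every object has trivial automorphism group (apply Lemma \ref{tech} to the Picard rank one surface furnished by Theorem \ref{111}). Theorem \ref{22} shows that the locus $S\subseteq\bm_d(\bc)$ of polarized K3 surfaces whose transcendental lattice enjoys property $R$---equivalently, the singular (Picard number $20$) surfaces whose transcendental discriminant is a non-zero square mod $p$---is dense. Since $\mathcal{U}$ is open and dense while $S$ is dense, the complement $\mathcal{U}^c$ is closed and proper, so it cannot contain $S$; hence $\mathcal{U}\cap S\neq\emptyset$, and I may choose a complex polarized K3 surface $(X,L)$ of degree $d$ with $Aut(X,L)=\{id\}$ whose transcendental lattice $T$ lies in $S_p$.

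Next I would descend and specialize. Since $(X,L)$ has Picard number $20$ it is a singular K3 surface, hence defined over $\bar{\bq}$, and by the discussion surrounding Theorem \ref{classify} its reduction at a suitable place above $p$ is a polarized K3 surface $(X_0,L_0)$ over $\bF$ of Picard number $20$; property $R$ guarantees that this reduction is ordinary, i.e.\ of height one. The final and most delicate step transfers triviality of the automorphism group across characteristics. Here I would use that $X_0$ is weakly tame (as $p>22$) and, by Theorem \ref{aut}, admits a Neron-Severi preserving canonical lift $\mathfrak{X}/W$ with $Aut(\mathfrak{X}\otimes K)\cong Aut(X_0)$; the standard comparison then yields $Aut(\mathfrak{X}\otimes\bc)\cong Aut(X_0)$ compatibly with Neron-Severi groups, and the canonical lift recovers $(X,L)$. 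Because the lift preserves the class of $L$, these identifications restrict to the polarized automorphism groups, giving $Aut(X_0,L_0)\cong Aut(X,L)=\{id\}$, which is precisely the point required by the first paragraph.

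I expect this last comparison to be the main obstacle: one must know that the characteristic zero surface selected above is genuinely (isomorphic to) the canonical lift of its reduction, so that the automorphism identification of Theorem \ref{aut} applies, and that this identification is compatible with the chosen polarization rather than merely with the underlying K3 surfaces. The density step plays a purely auxiliary role---it only ensures that a property $R$ surface can be found inside the trivial automorphism locus $\mathcal{U}$, which is automatic once one observes that a dense set must meet a dense open set---so that the object we reduce already has trivial polarized automorphism group in characteristic zero.
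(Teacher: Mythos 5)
Your proposal follows the paper's own argument essentially verbatim: the same reduction via Lemma \ref{tech} and Theorem \ref{gen} (with tameness from \cite{dolgachev2009finite}), the same intersection of the dense property-$R$ locus from Theorem \ref{22} with the dense open trivial-automorphism locus coming from Theorem \ref{111}, and the same transfer of automorphism triviality across characteristics via Theorems \ref{classify} and \ref{aut}. The subtlety you flag at the end --- that the chosen complex surface must actually be the canonical lift of its reduction and that the automorphism comparison must respect the polarization --- is real, but it is left equally implicit in the paper's own treatment, so your write-up matches the paper's level of rigor and approach.
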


\begin{rem}
If we only want to show the existence of a polarized K3 surface with trivial automorphism group, we can consider the moduli stack $\bm_d$ over $\bs(\bz[\frac{1}{2d}])$ and compare the special fiber and generic fiber. But here we have a stronger statement.
\end{rem}

\bibliographystyle{abbrv}
\bibliography{MyCitation}

\end{document}